\documentclass[12pt]{amsart}

\usepackage[utf8]{inputenc}
\usepackage[T1]{fontenc}

\usepackage{amsmath,amssymb,amsfonts,textcomp,amsthm,xifthen,graphicx,color}

\usepackage{multirow}

\usepackage{enumerate}
\usepackage{enumitem}

\usepackage{fullpage}

\usepackage[utf8]{inputenc}

\usepackage[pdftex,
            pdfauthor={Thomas F\"uhrer},
            pdftitle={On a mixed FEM and a FOSLS with $H^{-1}$ loads},
            ]{hyperref}

\usepackage{pgfplots}
\usepgfplotslibrary{groupplots}
\pgfplotsset{compat=newest}
\usepgfplotslibrary{external}
\usepgfplotslibrary{colorbrewer}
\tikzexternalize

\newtheorem{theorem}{Theorem}
\newtheorem{lemma}[theorem]{Lemma}
\newtheorem{example}[theorem]{Example}
\newtheorem{corollary}[theorem]{Corollary}
\newtheorem{proposition}[theorem]{Proposition}

\newcommand{\patch}{\omega}
\newcommand{\Patch}{\Omega}

\newcommand{\conv}{\mathrm{conv}}
\newcommand{\weight}{\boldsymbol{\varphi}}
\newcommand{\JSZ}{J^{\mathrm{SZ}}}
\newcommand{\JCL}{J^{\mathrm{Cl}}}
\newcommand{\QCL}{Q^{\mathrm{Cl}}}
\newcommand{\JCLw}{J^{\mathrm{Cl},\weight}}
\newcommand{\QCLw}{Q^{\mathrm{Cl},\weight}}
\newcommand{\Bop}{B}

%

\def\di{\mathrm{d}}

\DeclareMathOperator{\supp}{supp}

\DeclareMathOperator*{\argmin}{arg\, min}

\newcommand{\ip}[2]{(#1\hspace*{.5mm},#2)}

\newcommand{\norm}[3][]{#1\|#2#1\|_{#3}}

\newcommand{\diam}{\mathrm{diam}}

\def\div{\operatorname{div}}

\newcommand{\Hdivset}[1]{\boldsymbol{H}(\div;#1)}

\newcommand{\set}[2]{\big\{#1\,:\,#2\big\}}

\newcommand{\RT}{\ensuremath{\mathcal{RT}}}

\newcommand{\R}{\ensuremath{\mathbb{R}}}
\newcommand{\N}{\ensuremath{\mathbb{N}}}

\newcommand{\TT}{\ensuremath{\mathcal{T}}}

\newcommand{\PP}{\ensuremath{\mathcal{P}}}

\newcommand{\OO}{\ensuremath{\mathcal{O}}}

\newcommand{\VV}{\ensuremath{\mathcal{V}}}


\newcommand{\ssigma}{{\boldsymbol\sigma}}

\newcommand{\ttau}{{\boldsymbol\tau}}

\newcommand{\cchi}{{\boldsymbol\chi}}


\begin{document}

\title{On a mixed FEM and a FOSLS with $H^{-1}$ loads}
\date{\today}

\author{Thomas F\"uhrer}
\address{Facultad de Matem\'{a}ticas, Pontificia Universidad Cat\'{o}lica de Chile, Santiago, Chile}
\email{tofuhrer@mat.uc.cl}

\thanks{{\bf Acknowledgment.} 
This work was supported by ANID through FONDECYT project 1210391}

\keywords{least-squares method, mixed FEM, singular data}
\subjclass[2010]{65N30, 
                 65N12 
                 }
\begin{abstract}
We study variants of the mixed finite element method (mixed FEM) and the first-order system least-squares finite element (FOSLS) for the Poisson problem where we replace the load by a suitable regularization which permits to use $H^{-1}$ loads. We prove that any bounded $H^{-1}$ projector onto piecewise constants can be used to define the regularization and yields quasi-optimality of the lowest-order mixed FEM resp. FOSLS in weaker norms. Examples for the construction of such projectors are given. One is based on the adjoint of a weighted Cl\'ement quasi-interpolator. We prove that this Cl\'ement operator has second-order approximation properties. For the modified mixed method we show optimal convergence rates of a postprocessed solution under minimal regularity assumptions --- a result not valid for the lowest-order mixed FEM without regularization. Numerical examples conclude this work.
\end{abstract}
\maketitle

\section{Introduction}
In this work we study a mixed finite element method (FEM) and a first-order least-squares FEM (FOSLS) for the Poisson problem with $H^{-1}(\Omega)$ loads where $H^{-1}(\Omega)$ denotes the topological dual of the Sobolev space $H_0^1(\Omega)$ and $\Omega\subset \R^n$ ($n=2,3$) denotes a bounded Lipschitz domain with polytopal boundary.
Both numerical methods are based on the following first-order reformulation of the Poisson problem with homogeneous Dirichlet boundary conditions, 
\begin{subequations}\label{eq:fo}
  \begin{alignat}{2}
    \div\ssigma &= -f &\quad&\text{in }\Omega,\label{eq:fo:a} \\
    \ssigma-\nabla u &= 0 &\quad&\text{in }\Omega, \label{eq:fo:b}\\
    u &= 0 &\quad&\text{on }\Gamma:=\partial\Omega.
  \end{alignat}
\end{subequations}
Given $f\in L^2(\Omega)$ the mixed FEM seeks $(u_\TT,\ssigma_\TT)\in W_\TT\times \Sigma_\TT$ such that 
\begin{subequations}\label{eq:mixed:standard}
  \begin{alignat}{2}
    &\ip{\ssigma_\TT}{\ttau} + \ip{u_\TT}{\div\ttau} &\,=\,& 0, \\
    &\ip{\div\ssigma_\TT}{v} &\,=\,&  \ip{-f}{v}
  \end{alignat}
\end{subequations}
for all $(v,\ttau)\in W_\TT\times \Sigma_\TT := \PP^0(\TT)\times \RT^0(\TT)$, where $\TT$ denotes a shape-regular mesh of simplices of $\Omega$, $\PP^p(\TT)$ denotes the space of $\TT$-piecewise polynomials of degree less or equal to $p\in\N_0$ and $\RT^0(\TT)$ is the lowest-order Raviart--Thomas space.
Given $f\in L^2(\Omega)$ the FOSLS seeks the minimizer of the $L^2(\Omega)$ residuals of~\eqref{eq:fo:a}--\eqref{eq:fo:b} over the discrete space $U_\TT\times \Sigma_\TT = \PP^1(\TT)\cap H_0^1(\Omega) \times \Sigma_\TT$, i.e., 
\begin{align}\label{eq:fosls:standard}
  (u_\TT,\ssigma_\TT) = \argmin_{(v,\ttau)\in U_\TT\times \Sigma_\TT} \norm{\div\ttau+f}{}^2 + \norm{\nabla v-\ttau}{}^2.
\end{align}

Both methods, \eqref{eq:mixed:standard} and~\eqref{eq:fosls:standard}, are not well defined for $f\in H^{-1}(\Omega)$. In the recent article~\cite{MINRESsingular} we proposed to replace the load in~\eqref{eq:fosls:standard} by a suitable polynomial approximation. The very same ideas in the analysis can be applied for the mixed FEM~\eqref{eq:mixed:standard}. 
To describe the new variants, let $Q_\TT^\star\colon H^{-1}(\Omega)\to \PP^0(\TT)\subseteq H^{-1}(\Omega)$ denote a bounded projection operator, i.e.,
\begin{align*}
  Q_\TT^\star \phi = \phi \quad\text{for all } \phi\in \PP^0(\TT) \qquad\text{and}\qquad 
  \norm{Q_\TT^\star\phi}{-1} \lesssim \norm{\phi}{-1} \quad\text{for all }\phi \in H^{-1}(\Omega).
\end{align*}
  
The modified methods are defined as follows: 
\newline
\emph{Modified mixed FEM:} Given $f\in H^{-1}(\Omega)$ seek $(u_\TT,\ssigma_\TT)\in W_\TT\times \Sigma_\TT$ such that
\begin{subequations}\label{eq:mixed:mod}
  \begin{alignat}{2}
    &\ip{\ssigma_\TT}{\ttau} + \ip{u_\TT}{\div\ttau} &\,=\,& 0, \\
    &\ip{\div\ssigma_\TT}{v} &\,=\,&  \ip{-Q_\TT^\star f}{v}
  \end{alignat}
\end{subequations}
for all $(v,\ttau)\in W_\TT\times \Sigma_\TT$.
\newline
\emph{Modified FOSLS:} Given $f\in H^{-1}(\Omega)$ solve
\begin{align}\label{eq:fosls:mod}
  (u_\TT,\ssigma_\TT) = \argmin_{(v,\ttau)\in U_\TT\times \Sigma_\TT} \norm{\div\ttau+Q_\TT^\star f}{}^2 + \norm{\nabla v-\ttau}{}^2.
\end{align}

In our recent work~\cite{MINRESsingular} we proved that the solution $(u_\TT,\ssigma_\TT)$ of~\eqref{eq:fosls:mod} satisfies the error estimate
\begin{align*}
  \norm{u-u_\TT}{1} + \norm{\ssigma-\ssigma_\TT}{} \lesssim h^s\norm{f}{-1+s}
\end{align*}
where $s\in[0,1]$ depends on $\Omega$ and the regularity of $f$, and $h$ is the maximum element diameter.
A similar estimate may be derived for the solution of~\eqref{eq:mixed:mod} following the techniques from~\cite{MINRESsingular}, or the ones presented here, see Corollary~\ref{cor:quasiopt:mixed} below.

In the article at hand we complement on our results from~\cite{MINRESsingular} in that we show quasi-optimality of both the modified methods~\eqref{eq:mixed:mod} and~\eqref{eq:fosls:mod}: 
Let $(u,\ssigma)\in H_0^1(\Omega)\times L^2(\Omega;\R^d)$ denote the solution of~\eqref{eq:fo}. If $(u_\TT,\ssigma_\TT)\in W_\TT\times \Sigma_\TT$ denotes the solution of~\eqref{eq:mixed:mod}, then (see Theorem~\ref{thm:quasiopt:mixed})
\begin{align*}
  \norm{u-u_\TT}{} + \norm{\ssigma-\ssigma_\TT}{} \lesssim \min_{(v,\ttau)\in W_\TT\times \Sigma_\TT} \norm{u-v}{} + \norm{\ssigma-\ttau}{},
\end{align*}
or if $(u_\TT,\ssigma_\TT)\in U_\TT\times \Sigma_\TT$ denotes the solution of~\eqref{eq:fosls:mod}, then (see Theorem~\ref{thm:quasiopt:fosls})
\begin{align*}
  \norm{u-u_\TT}{1} + \norm{\ssigma-\ssigma_\TT}{} \lesssim \min_{(v,\ttau)\in U_\TT\times\Sigma_\TT} \norm{u-v}{1} + \norm{\ssigma-\ttau}{}.
\end{align*}

The mixed FEM~\eqref{eq:mixed:standard} and FOSLS~\eqref{eq:fosls:standard} have been studied thoroughly and we refer the interested reader to~\cite{BoffiBrezziFortin,Gatica14,BochevGunzberger09} for an introduction, overview and further literature on these methods. 
A variant of the hybrid higher-order method (known as HHO) with $H^{-1}(\Omega)$ loads is introduced and analyzed in~\cite{ErnZanotti20}. 
For further details on and constructions of different $H^{-1}(\Omega)$ projection operators onto piecewise polynomial spaces we refer to the recent work~\cite{DS21intOp} where also various applications are discussed. In~\cite{MillarMugaRojasVdZee22} a general theory for the approximation of rough linear functionals is developed.

Postprocessing schemes for the mixed method~\eqref{eq:mixed:standard} are well known~\cite{Stenberg91}, and optimal convergence rates for higher-order elements can be shown, whereas the lowest-order case, as considered here, requires sufficiently regular solutions, see, e.g.,~\cite[Theorem~2.1 and Remark~2.1]{Stenberg91}. 
In the work at hand we prove that the postprocessing scheme from~\cite{Stenberg91} applied to solutions of the modified mixed FEM~\eqref{eq:mixed:mod} yields optimal rates with only minimal regularity assumptions. 

The analysis of the latter is based on the dual of a weighted Cl\'ement quasi-interpolator. The advantage of our proposed construction is that the Cl\'ement operator reconstructs an approximation with second-order approximation properties from an elementwise projection on constants.
For an overview on Cl\'ement quasi-interpolators we refer to the works~\cite{Clement75,CarstensenClement99} and for additional information and applications to~\cite{CarstensenClement}.
As a side product of our analysis we obtain a result on the approximation by piecewise constants in the dual space of $H^2(\Omega)\cap H_0^1(\Omega)$ (Corollary~\ref{cor:Hm2approx}).

Our results on quasi-optimality in weaker norms might also be of interest for the analysis of FOSLS for eigenvalue problems~\cite{BertrandBoffi22}.
The authors of~\cite{MRV22} define a superconvergent FEM based on the postprocessing technique from~\cite{Stenberg91}.
Our new findings for the postprocessing scheme (Section~\ref{sec:L2}) could also improve the results from~\cite{MRV22} for the lowest-order case.

In this article we only consider lowest-order discretizations, though, many results can be extended to the higher-order case. E.g., for the FOSLS we refer the reader to the very recent work~\cite[Remark~4.7]{MSS23}.
We restrict the presentation to $n=2,3$ but note that our results are valid for $n=1$. 
The remainder of this work is organized as follows:
Section~\ref{sec:main} introduces some notation and contains the statement and proofs of the quasi-optimality results stated above. 
In Section~\ref{sec:clement} we study a weighted Cl\'ement quasi-interpolator and discuss some of its main properties. 
Optimal error estimates for the postprocessed solution of~\eqref{eq:mixed:mod} and optimal $L^2(\Omega)$ error estimates for the scalar solution of~\eqref{eq:fosls:mod} are given in Section~\ref{sec:L2}. 
This article closes with various numerical experiments (Section~\ref{sec:ex}).

\section{Quasi-optimality}\label{sec:main}
This section is devoted to the proof of quasi-optimality results of the modified variants~\eqref{eq:mixed:mod} and~\eqref{eq:fosls:mod} of the mixed FEM and FOSLS claimed in the introduction. 
Before we give details in Section~\ref{sec:quasiopt} we recall some known properties of projection operators needed for the analysis in Section~\ref{sec:projections}.
The proof of quasi-optimality requires a $H^{-1}(\Omega)$ bounded projection operator and we also give an example of such an operator that is easy to implement.

\subsection{Sobolev spaces}
For a Lipschitz domain $K\subseteq \Omega$ we denote by $H^k(K)$, $H_0^k(K)$, $k\in \N$ the usual Sobolev spaces with norms $\norm{\cdot}{K,k}$. If $K=\Omega$ we simply write $\norm{\cdot}{k}$. The space $H_0^1(\Omega)$ is equipped with the norm $\norm{\cdot}1:=\norm{\nabla(\cdot)}{}$ where $\norm{\cdot}{}$ is the $L^2(\Omega)$ norm with inner product $\ip{\cdot}{\cdot}$. 
Similarly, $\norm{\cdot}{K}$ is the $L^2(K)$ norm with inner product $\ip{\cdot}{\cdot}_K$.
Intermediate Sobolev spaces with index $s$ are defined by (real) interpolation, e.g., $\widetilde H^s(\Omega) = [L^2(\Omega),H_0^1(\Omega)]_s$, $H^s(\Omega) = [L^2(\Omega),H^1(\Omega)]_s$ with norm denoted by $\norm{\cdot}s$. 
Dual spaces of Sobolev spaces are understood with respect to the extended $L^2$ inner product, e.g, the dual of $H_0^1(\Omega)$ is denoted by $H^{-1}(\Omega)$ and equipped with the dual norm
\begin{align*}
  \norm{\phi}{-1} = \sup_{0\neq v\in H_0^1(\Omega)} \frac{\ip{\phi}v}{\norm{v}1}.
\end{align*}
Note that $H^{-s}(\Omega) = (\widetilde H^s(\Omega))'$ with norms $\norm{\cdot}{-s}$, $s\in(0,1)$.

\subsection{Projection and interpolation operators}\label{sec:projections}
Let $\TT$ denote a regular mesh of simplices of $\Omega$ with $h_\TT\in L^\infty(\Omega)$ denoting the elementwise mesh-size function, $h_\TT|_T = \diam(T)$ for $T\in\TT$.
With $\VV$ we denote all vertices of $\TT$ and $\VV_0 = \VV\setminus\Gamma$ are the interior vertices. 
The set of $n+1$ vertices of an element $T\in\TT$ is $\VV_T$.
The patch of all elements of $\TT$ sharing a node $z\in\VV$ is denoted by $\patch_z$ and $\Patch_z$ is used for the domain associated to $\patch_z$. 
The element patch $\patch_T$ is the union of all vertex patches $\patch_z$ with $z\in\VV_T$ and $\Patch_T$ is the corresponding domain.

Let $\Pi_\TT^0\colon L^2(\Omega)\to W_\TT=\PP^0(\TT)$ denote the $L^2(\Omega)$-orthogonal projection which has the first-order approximation property
\begin{align*}
  \norm{(1-\Pi_\TT^0)\phi}{T} \lesssim h_T\norm{\nabla \phi}{T} \quad\text{for all } \phi\in H^1(T), \,T\in\TT. 
\end{align*}
Here, and in the remainder, the notation $A\lesssim B$ means that there exists a generic constant, possibly depending on the shape-regularity constant $\kappa_\TT$ and $\Omega$, such that $A\leq C\cdot B$. The notation $A\eqsim B$ means $A\lesssim B$ and $B\lesssim A$.
The shape-regularity constant of a mesh $\TT$ is given by
\begin{align*}
  \kappa_\TT = \max_{T\in\TT} \frac{h_T^n}{|T|},
\end{align*}
where $|\cdot|$ denotes the volume measure.

Recall that $\Sigma_\TT = \RT^0(\TT)$ is the lowest-order Raviart--Thomas space. We denote by $\Pi_\TT^{\RT}\colon \Hdivset\Omega\to \Sigma_\TT$ the projector constructed in~\cite{egsv22}. It has the following properties, see~\cite[Theorem~3.2]{egsv22}, where $\RT^0(T)$ denotes the lowest-order Raviart--Thomas space on the element $T$.
\begin{subequations}\label{eq:propRTproj}
\begin{align}
  \div\Pi_\TT^{\RT}\ssigma &= \Pi_\TT^0\div\ssigma \quad\text{and}\\
  \norm{\ssigma-\Pi_\TT^{\RT}\ssigma}{}^2 &\lesssim \sum_{T\in\TT}\min_{\ttau\in \RT^0(T)} \norm{\ssigma-\ttau}{T}^2 + \norm{h_\TT(1-\Pi_\TT^0)\div\ssigma}{}^2
\end{align}
and, in particular, 
\begin{align}
  \norm{\Pi_\TT^{\RT}\ssigma}{\Hdivset\Omega}^2 := \norm{\Pi_\TT^{\RT}\ssigma}{}^2  +
  \norm{\div\Pi_\TT^{\RT}\ssigma}{}^2 \lesssim \norm{\ssigma}{\Hdivset\Omega}^2
\end{align}
for all $\ssigma\in \Hdivset\Omega$.
\end{subequations}

There are several possibilities to construct a bounded projection $Q_\TT^\star\colon H^{-1}(\Omega)\to \PP^0(\TT)$. 
We refer the interested reader to~\cite{DS21intOp} for an overview on existing operators and the construction of a family of $H^{-1}(\Omega)$ projectors into polynomial spaces. 
Here, we follow the construction presented in~\cite{MINRESsingular} resp.~\cite[Section~2.4]{MultilevelNorms21}. 
First, define the averaged Scott--Zhang-type quasi-interpolator $\JSZ_\TT\colon H_0^1(\Omega)\to\PP^1(\TT)\cap H_0^1(\TT)$ by
\begin{align*}
  \JSZ_\TT v = \sum_{z\in\VV_0} \ip{v}{\psi_z} \eta_z, 
\end{align*}
where $\{\eta_z\}_{z\in\VV_0}$ is the nodal basis of $\PP^1(\TT)\cap H_0^1(\Omega)$, i.e., $\eta_z(z') = \delta_{z,z'}$ for all $z,z'\in \VV_0$, and $\delta_{z,z'}$ denotes the Kronecker-$\delta$ symbol. 
Furthermore, $\psi_z \in \PP^1(\TT)$ with $\supp\psi_z\subseteq \overline{\Patch_z}$ denotes the bi-orthogonal dual basis function satisfying
\begin{align*}
  \ip{\psi_z}{\eta_{z'}} = \delta_{z,z'} \quad\forall z,z'\in \VV_0.
\end{align*}
An explicit representation is given by
\begin{align*}
  \psi_z|_{\Patch_z} = \frac{1}{|\Patch_z|}\big( (n+1)(n+2)\eta_z - (n+1)\big).
\end{align*}
For $T\in\TT$ define the bubble function $\eta_{\mathrm{b},T} := c_T \prod_{z\in \VV_T} \eta_z$ with $c_T$ chosen so that $\ip{\eta_{\mathrm{b},T}}1 = 1$ and
\begin{align*}
  \Bop_\TT v = \sum_{T\in\TT}\ip{v}{\chi_T}\eta_{\mathrm{b},T},
\end{align*}
where $\chi_T\in L^\infty(\Omega)$ denotes the characteristic function on $T\in\TT$.
It is straightforward to check that $\Bop_\TT$ is locally bounded, i.e.,
\begin{align*}
  \norm{\Bop_\TT v}T \lesssim \norm{v}T \quad\text{for all } T\in\TT \text{ and } v\in L^2(\Omega).
\end{align*}
The operator 
\begin{align}\label{eq:defQCL}
  \QCL_\TT := \Pi_\TT^0\big( \JSZ_\TT+\Bop_\TT(1-\JSZ_\TT)\big)' = \Pi_\TT^0\big( (\JSZ_\TT)'+(1-\JSZ_\TT)'\Bop_\TT'\big) \colon H^{-1}(\Omega)\to \PP^0(\TT)
\end{align}
with 
\begin{align*}
  (\JSZ_\TT)' \phi = \sum_{z\in\VV_0} \ip{\phi}{\eta_z} \psi_z, \quad (\Bop_\TT)' \phi = \sum_{T\in\TT} \ip{\phi}{\eta_{\mathrm{b},T}}\chi_T
\end{align*}
has the following properties.
\begin{proposition}[{\cite[Theorem~8]{MultilevelNorms21}}]\label{prop:QCL}
  The operator defined in~\eqref{eq:defQCL} satisfies
  \begin{itemize}
    \item $\QCL_\TT\phi = \phi$ for all $\phi\in \PP^0(\TT)$, 
    \item $\norm{(1-\QCL_\TT)\phi}{-1} \lesssim \norm{h_\TT\phi}{}$ for all $\phi\in L^2(\Omega)$, 
    \item $\norm{\QCL_\TT\phi}{-1}\lesssim \norm{\phi}{-1}$ for all $\phi \in H^{-1}(\Omega)$, and
    \item $\norm{\QCL_\TT\phi}{}\lesssim \norm{\phi}{}$ for all $\phi \in L^2(\Omega)$.
  \end{itemize}
\end{proposition}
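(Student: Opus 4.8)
The plan is to recognize $\QCL_\TT$ as the adjoint of a primal quasi-interpolation operator and to transfer all four statements to stability and approximation estimates for that primal operator. Extending $\JSZ_\TT,\Bop_\TT$ to $L^2(\Omega)$ in the obvious way (their defining formulas only involve $L^2$ pairings) and writing $G_\TT := \JSZ_\TT + \Bop_\TT(1-\JSZ_\TT)$, I would set
\begin{align*}
  P_\TT := G_\TT\,\Pi_\TT^0 \colon L^2(\Omega)\to \PP^1(\TT)\cap H_0^1(\Omega).
\end{align*}
Comparing with~\eqref{eq:defQCL} and using $(\Pi_\TT^0)'=\Pi_\TT^0$ shows $\QCL_\TT = (P_\TT)'$, i.e.\ $\dual{\QCL_\TT\phi}{v} = \dual{\phi}{P_\TT v}$ for all $\phi\in H^{-1}(\Omega)$ and $v\in H_0^1(\Omega)$. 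By this identity and the definition of the dual norm it then suffices to establish: (i) $\Pi_\TT^0 P_\TT = \Pi_\TT^0$; (ii) $\norm{P_\TT v}1\lesssim\norm v1$; (iii) $\norm{h_\TT^{-1}(1-P_\TT)v}{}\lesssim\norm v1$; and (iv) $\norm{P_\TT v}{}\lesssim\norm v{}$. For example (iii) yields the second bullet, since $\dual{(1-\QCL_\TT)\phi}{v} = \dual{\phi}{(1-P_\TT)v} = \ip{h_\TT\phi}{h_\TT^{-1}(1-P_\TT)v}$ and Cauchy--Schwarz give $\norm{(1-\QCL_\TT)\phi}{-1}\lesssim\norm{h_\TT\phi}{}$.

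First I would dispose of the projection property (i). The normalization $\ip{\eta_{\mathrm{b},T}}1 = 1$ together with $\supp\eta_{\mathrm{b},T}\subseteq T$ gives the elementwise identity $(\Pi_\TT^0\Bop_\TT u)|_T = \tfrac{1}{|T|}\ip{u}{\chi_T}\ip{\eta_{\mathrm{b},T}}1 = \tfrac{1}{|T|}\ip{u}{\chi_T} = (\Pi_\TT^0 u)|_T$, hence $\Pi_\TT^0\Bop_\TT = \Pi_\TT^0$. Consequently
\begin{align*}
  \Pi_\TT^0 G_\TT = \Pi_\TT^0\JSZ_\TT + \Pi_\TT^0(1-\JSZ_\TT) = \Pi_\TT^0,
\end{align*}
so $\Pi_\TT^0 P_\TT = \Pi_\TT^0 G_\TT\Pi_\TT^0 = \Pi_\TT^0$. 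Dualizing and testing against $w\in\PP^0(\TT)$ yields $\ip{\QCL_\TT\phi}{w} = \dual{\phi}{P_\TT w} = \ip{\phi}{\Pi_\TT^0 P_\TT w} = \ip{\phi}{w}$ for $\phi\in\PP^0(\TT)$, i.e.\ $\QCL_\TT\phi=\phi$. I would emphasize that the bubble correction is exactly what repairs the element averages destroyed by $\JSZ_\TT$ and is therefore indispensable here.

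The remaining estimates (ii)--(iv) are the standard stability and first-order approximation properties of the Cl\'ement-type operator $P_\TT$, which I would prove by localization. The ingredients are: local $L^2$-stability of $\JSZ_\TT$ (biorthogonality of $\psi_z,\eta_z$) and of $\Bop_\TT$ (stated above); the locality that $P_\TT v|_T$ depends only on $v|_{\Patch_T}$; reproduction of constants on interior patches, which holds because $\ip{1}{\psi_z}=1$ and $\sum_{z\in\VV_0}\eta_z\equiv 1$ away from $\Gamma$, so that $(1-\JSZ_\TT)$ annihilates constants there and the bubble term drops out; Poincar\'e estimates on patches; and the finite overlap of $\{\patch_T\}$ from shape-regularity. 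The $L^2$-bound (iv) is then immediate, and the approximation bound (iii) follows elementwise by inserting a local constant $c$, writing $v - P_\TT v = (v-c) - P_\TT(v-c)$ on $\patch_T$, and combining $L^2$-stability with Poincar\'e; near $\Gamma$ the membership $v\in H_0^1(\Omega)$ supplies the missing control via a Poincar\'e inequality with vanishing trace.

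The step I expect to be most delicate is the $H^1$-stability (ii), because of the bubble term. Since $\norm{\nabla\eta_{\mathrm{b},T}}{T}\eqsim h_T^{-1}\norm{\eta_{\mathrm{b},T}}{T}$, a naive inverse estimate on $\Bop_\TT(1-\JSZ_\TT)\Pi_\TT^0 v$ produces a factor $h_T^{-1}$; to absorb it I would exploit that its argument is first-order small, namely $\norm{(1-\JSZ_\TT)\Pi_\TT^0 v}{T}\lesssim h_T\norm{\nabla v}{\Patch_T}$, itself a consequence of the first-order approximation of $\Pi_\TT^0$ and $\JSZ_\TT$ together with constant reproduction. Combining this with the inverse estimate turns the dangerous $h_T^{-1}$ into an $\OO(1)$ contribution, and summing over elements with finite overlap gives $\norm{\nabla P_\TT v}{}\lesssim\norm{\nabla v}{}$; the $H_0^1$-conformity of $P_\TT v$ is built into the construction. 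Assembling (i)--(iv) and dualizing yields the four bullets.
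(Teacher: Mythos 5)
Your proof is correct and follows essentially the same route as the paper (which cites this result but proves the analogous Theorem~\ref{thm:Qprojmod} for $\QCLw_\TT$ in exactly this way): identify $\QCL_\TT$ as the adjoint of the bubble-corrected Cl\'ement-type operator, verify the projection property via $\Pi_\TT^0\Bop_\TT=\Pi_\TT^0$, and transfer $L^2$/$H^1$ stability and the first-order approximation property of the primal operator by duality. The only slip is cosmetic: the range of $P_\TT$ contains the degree-$(n+1)$ bubbles, so it is not $\PP^1(\TT)\cap H_0^1(\Omega)$ but a subspace of $H_0^1(\Omega)$, which is all the argument needs.
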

In particular, $Q_\TT^\star = \QCL_\TT$ can be used in the modified schemes~\eqref{eq:mixed:mod},~\eqref{eq:fosls:mod}.
We further study this operator in Section~\ref{sec:clement} where we recall a relation of $\JSZ_\TT$ to the Cl\'ement interpolation operator with $0$-th order moments.
This relation is also of practical interest as it simplifies the calculation of $\QCL_\TT$.

\subsection{Analysis of the modified mixed FEM resp. FOSLS}\label{sec:quasiopt}
We need the following observation.
\begin{lemma}\label{lem:aux}
  Let $u\in H_0^1(\Omega)$. 
  If $Q_\TT^\star\colon H^{-1}(\Omega)\to W_\TT\subseteq H^{-1}(\Omega)$ is a bounded projector, then, 
  \begin{align*}
    \norm{(1-Q_\TT^\star)\Delta u}{-1} \lesssim \norm{\nabla u-\ttau}{} \quad\text{for all } \ttau \in \Sigma_\TT.
  \end{align*}
\end{lemma}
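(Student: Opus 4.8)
The plan is to exploit the defining projection property of $Q_\TT^\star$ together with the fact that the divergence of a lowest-order Raviart--Thomas field is $\TT$-piecewise constant, so that an arbitrary $\ttau\in\Sigma_\TT$ can be inserted for free. First I would record that, since $u\in H_0^1(\Omega)$, the Laplacian $\Delta u=\div\nabla u$ is a well-defined element of $H^{-1}(\Omega)$ acting by $\ip{\Delta u}{w}=-\ip{\nabla u}{\nabla w}$ for $w\in H_0^1(\Omega)$, so that $(1-Q_\TT^\star)\Delta u$ is meaningful. The crucial observation is then the following: fix any $\ttau\in\Sigma_\TT=\RT^0(\TT)$; its divergence satisfies $\div\ttau\in\PP^0(\TT)=W_\TT$, hence is fixed by the projector, $Q_\TT^\star\div\ttau=\div\ttau$, i.e.\ $(1-Q_\TT^\star)\div\ttau=0$. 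By linearity this gives
\begin{align*}
  (1-Q_\TT^\star)\Delta u = (1-Q_\TT^\star)\div\nabla u = (1-Q_\TT^\star)\div(\nabla u-\ttau).
\end{align*}

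Next I would close the estimate with two routine boundedness facts. The operator $1-Q_\TT^\star$ is bounded on $H^{-1}(\Omega)$, since $\norm{(1-Q_\TT^\star)\phi}{-1}\le\norm{\phi}{-1}+\norm{Q_\TT^\star\phi}{-1}\lesssim\norm{\phi}{-1}$ follows from the assumed $H^{-1}(\Omega)$-bound on $Q_\TT^\star$ and the triangle inequality. Moreover, the distributional divergence maps $L^2$ vector fields into $H^{-1}(\Omega)$ with norm at most one: for a vector field $\vv$ and $w\in H_0^1(\Omega)$, integration by parts gives $\ip{\div\vv}{w}=-\ip{\vv}{\nabla w}$, so that $\norm{\div\vv}{-1}\le\norm{\vv}{}$. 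Applying these with $\vv=\nabla u-\ttau$ yields
\begin{align*}
  \norm{(1-Q_\TT^\star)\Delta u}{-1} \lesssim \norm{\div(\nabla u-\ttau)}{-1} \le \norm{\nabla u-\ttau}{},
\end{align*}
and since $\ttau\in\Sigma_\TT$ was arbitrary the claimed bound follows.

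I do not anticipate a genuine obstacle: the whole argument rests on the single structural fact $\div\Sigma_\TT\subseteq W_\TT$, which is invariant under $Q_\TT^\star$ and lets one subtract off the $\ttau$-contribution without cost. The only point demanding minor care is the bookkeeping of dualities --- namely checking that $\Delta u$, $\div\ttau$, and $\div(\nabla u-\ttau)$ are consistently regarded as elements of $H^{-1}(\Omega)$ so that the projector and the subtraction act exactly where intended, and that the boundedness hypothesis on $Q_\TT^\star$ is used only on $H^{-1}(\Omega)$ (no $L^2$-stability of $Q_\TT^\star$ is needed here).
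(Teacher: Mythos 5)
Your proof is correct and follows essentially the same route as the paper: use $\div\Sigma_\TT\subseteq W_\TT$ and the projection property to write $(1-Q_\TT^\star)\Delta u=(1-Q_\TT^\star)\div(\nabla u-\ttau)$, then combine $H^{-1}(\Omega)$-boundedness of $1-Q_\TT^\star$ with the bound $\norm{\div\vv}{-1}\leq\norm{\vv}{}$. The paper states the identity for general $\ttau\in\Hdivset\Omega$ with $\div\ttau\in W_\TT$, but this is only a cosmetic difference.
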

\begin{proof}
  Let $\ttau\in \Hdivset\Omega$ with $\div\ttau\in W_\TT$ be arbitrary. Since $Q_\TT^\star$ is a bounded projection we have that
  \begin{align*}
    \norm{(1-Q_\TT^\star)\Delta u}{-1} &= \norm{(1-Q_\TT^\star)\div(\nabla u-\ttau)}{-1} 
    \\ & \lesssim \norm{\div(\nabla u-\ttau)}{-1} \leq \norm{\nabla u-\ttau}{},
  \end{align*}
  where the last estimate follows from boundedness of $\div\colon L^2(\Omega;\R^n)\to H^{-1}(\Omega)$. 
\end{proof}

%

The following theorem is the first main result of this section.
\begin{theorem}\label{thm:quasiopt:mixed}
  Let $Q_\TT^\star\colon H^{-1}(\Omega)\to W_\TT\subseteq H^{-1}(\Omega)$ denote a bounded projection.
  Given $f\in H^{-1}(\Omega)$, let $(u,\ssigma)\in H_0^1(\Omega)\times L^2(\Omega;\R^n)$ denote the solution of~\eqref{eq:fo}.
  The unique solution $(u_\TT,\ssigma_\TT)\in W_\TT\times \Sigma_\TT$ of~\eqref{eq:mixed:mod} satisfies
  \begin{align*}
    \norm{u-u_\TT}{} + \norm{\ssigma-\ssigma_\TT}{} \lesssim \min_{(v,\ttau)\in W_\TT\times \Sigma_\TT} \norm{u-v}{} + \norm{\ssigma-\ttau}{}.
  \end{align*}
\end{theorem}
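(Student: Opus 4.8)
The plan is to reduce the problem with the rough load $f\in H^{-1}(\Omega)$ to the \emph{standard} lowest-order mixed FEM for a regularized problem whose data lie in $L^2(\Omega)$, and then to control the resulting perturbation in the weaker norms by means of Lemma~\ref{lem:aux}. Set $f_\TT := Q_\TT^\star f\in\PP^0(\TT)\subseteq L^2(\Omega)$ and introduce the auxiliary continuous solution $(\hat u,\hat\ssigma)\in H_0^1(\Omega)\times L^2(\Omega;\R^n)$ of~\eqref{eq:fo} with $f$ replaced by $f_\TT$. The decisive observation is that, since the discrete equations~\eqref{eq:mixed:mod} only see the load through $\ip{-Q_\TT^\star f}{v}=\ip{-f_\TT}{v}$, the modified discrete solution $(u_\TT,\ssigma_\TT)$ \emph{coincides} with the standard mixed FEM approximation of the regularized problem. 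Moreover, because $\div\hat\ssigma=-f_\TT\in W_\TT$, the regularized flux $\hat\ssigma=\nabla\hat u$ belongs to $\Hdivset\Omega$, so that the classical machinery, which is unavailable for $\ssigma$ itself since $\div\ssigma=-f\notin L^2(\Omega)$ in general, does apply to $\hat\ssigma$.

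First I would establish the standard $L^2(\Omega)$ quasi-optimality for the regularized pair (this is the classical mixed-method analysis, cf.~\cite{BoffiBrezziFortin}). Subtracting~\eqref{eq:mixed:mod} from the continuous equations restricted to $W_\TT\times\Sigma_\TT$ gives $\ip{\div(\hat\ssigma-\ssigma_\TT)}{v}=0$ for all $v\in W_\TT$; since $\div(\hat\ssigma-\ssigma_\TT)\in W_\TT$ this forces $\div\ssigma_\TT=\div\hat\ssigma=-f_\TT$. Testing the first error equation with $\Pi_\TT^{\RT}\hat\ssigma-\ssigma_\TT\in\Sigma_\TT$, whose divergence vanishes by the commuting property $\div\Pi_\TT^{\RT}=\Pi_\TT^0\div$ together with $\Pi_\TT^0 f_\TT=f_\TT$, yields $\norm{\hat\ssigma-\ssigma_\TT}{}\lesssim\norm{\hat\ssigma-\Pi_\TT^{\RT}\hat\ssigma}{}$, and~\eqref{eq:propRTproj} (its second term vanishing because $(1-\Pi_\TT^0)\div\hat\ssigma=0$) bounds this by $\min_{\ttau\in\Sigma_\TT}\norm{\hat\ssigma-\ttau}{}$. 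For the scalar part I would invoke the discrete inf-sup condition: choosing $\ttau\in\Sigma_\TT$ with $\div\ttau=\Pi_\TT^0\hat u-u_\TT$ and $\norm{\ttau}{}\lesssim\norm{\Pi_\TT^0\hat u-u_\TT}{}$ (a bounded right inverse of $\div$ obtained via $\Pi_\TT^{\RT}$), and using the first error equation, gives $\norm{\Pi_\TT^0\hat u-u_\TT}{}\lesssim\norm{\hat\ssigma-\ssigma_\TT}{}$, whence $\norm{\hat u-u_\TT}{}\lesssim\min_{v\in W_\TT}\norm{\hat u-v}{}+\norm{\hat\ssigma-\ssigma_\TT}{}$.

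The remaining, and genuinely new, step is to bound the regularization error $\norm{u-\hat u}{}+\norm{\ssigma-\hat\ssigma}{}$. Here $w:=u-\hat u\in H_0^1(\Omega)$ satisfies $\Delta w=(1-Q_\TT^\star)\Delta u$, and the elementary identity $\norm{\nabla w}{}=\norm{\Delta w}{-1}$ valid for $w\in H_0^1(\Omega)$ gives $\norm{\ssigma-\hat\ssigma}{}=\norm{\nabla w}{}=\norm{(1-Q_\TT^\star)\Delta u}{-1}$. Lemma~\ref{lem:aux} then bounds the right-hand side by $\min_{\ttau\in\Sigma_\TT}\norm{\nabla u-\ttau}{}=\min_{\ttau\in\Sigma_\TT}\norm{\ssigma-\ttau}{}$, while $\norm{u-\hat u}{}\lesssim\norm{\nabla w}{}$ follows from the Poincar\'e inequality. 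This is precisely where the $H^{-1}(\Omega)$-boundedness of $Q_\TT^\star$, encapsulated in Lemma~\ref{lem:aux}, does the work, and I expect it to be the crux of the argument.

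Finally I would assemble the pieces by the triangle inequality, using $\norm{u-u_\TT}{}\le\norm{u-\hat u}{}+\norm{\hat u-u_\TT}{}$ and $\norm{\ssigma-\ssigma_\TT}{}\le\norm{\ssigma-\hat\ssigma}{}+\norm{\hat\ssigma-\ssigma_\TT}{}$, and replacing the best-approximation errors of $(\hat u,\hat\ssigma)$ by those of $(u,\ssigma)$ up to the regularization error already controlled in the previous step. Since $W_\TT\times\Sigma_\TT$ is a product space, $\min_{v\in W_\TT}\norm{u-v}{}+\min_{\ttau\in\Sigma_\TT}\norm{\ssigma-\ttau}{}=\min_{(v,\ttau)\in W_\TT\times\Sigma_\TT}\bigl(\norm{u-v}{}+\norm{\ssigma-\ttau}{}\bigr)$, which yields the claimed quasi-optimality estimate.
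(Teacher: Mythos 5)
Your proposal is correct and follows essentially the same route as the paper: you introduce the same auxiliary solution of $-\Delta\widetilde u = Q_\TT^\star f$, exploit that the modified scheme is the standard mixed FEM for the regularized problem, use the commuting projector $\Pi_\TT^{\RT}$ and its properties~\eqref{eq:propRTproj} to reduce to the best approximation of $\nabla\widetilde u$, and control the regularization error via $\norm{\nabla(u-\widetilde u)}{}=\norm{(1-Q_\TT^\star)f}{-1}$ and Lemma~\ref{lem:aux}. The only cosmetic difference is that you rederive the classical $L^2(\Omega)$ quasi-optimality of the mixed method (via $\div\ssigma_\TT=-Q_\TT^\star f$, the divergence-free test function, and the discrete inf-sup argument) where the paper simply cites it.
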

\begin{proof}
  Let $\widetilde u\in H_0^1(\Omega)$ denote the unique weak solution of
  \begin{align*}
    -\Delta \widetilde u = Q_\TT^\star f, \quad \widetilde u|_\Gamma = 0. 
  \end{align*}
  By the triangle inequality and $\norm{u}{1} = \norm{\Delta u}{-1}$ we have that
  \begin{align*}
    \norm{u-u_\TT}{} + \norm{\ssigma-\ssigma_\TT}{} &\leq \norm{u-\widetilde u}{} + \norm{\nabla u-\nabla\widetilde u}{}
    +\norm{\widetilde u-u_\TT}{} + \norm{\nabla\widetilde u-\ssigma_\TT}{}
    \\
    &\lesssim \norm{(1-Q_\TT^\star)f}{-1} + \norm{\widetilde u-u_\TT}{} + \norm{\nabla\widetilde u-\ssigma_\TT}{}.
  \end{align*}
  Employing the quasi-optimality of the mixed method with datum in $L^2(\Omega)$, see, e.g.~\cite{Gatica14}, we get that
  \begin{align*}
    \norm{\widetilde u-u_\TT}{} + \norm{\nabla\widetilde u-\ssigma_\TT}{} \lesssim 
    \min_{(v,\ttau)\in W_\TT\times \Sigma_\TT} \norm{\widetilde u-v}{} + \norm{\nabla\widetilde u-\ttau}{}
    +\norm{Q_\TT^\star f+\div\ttau}{}.
  \end{align*}
  Using the properties~\eqref{eq:propRTproj} of $\Pi_\TT^{\RT}$ and $\div\nabla\widetilde u = -Q_\TT^\star f\in W_\TT$, we see that 
  \begin{align*}
    \min_{\ttau\in \Sigma_\TT} \norm{\nabla\widetilde u-\ttau}{}^2+\norm{Q_\TT^\star f+\div\ttau}{}^2  
    &\lesssim \sum_{T\in\TT} \min_{\ttau\in\RT^0(T)}\norm{\nabla\widetilde u-\ttau}T^2 + \norm{h_\TT(1-\Pi_\TT^0)Q_\TT^\star f}{}^2 
    \\
    &\qquad +\norm{Q_\TT^\star f+\div\Pi_\TT^{\RT}\nabla\widetilde u}{}^2
    \\
    &\leq \min_{\ttau\in\Sigma_\TT}\norm{\nabla\widetilde u-\ttau}{}^2.
  \end{align*}
  Combining the estimates and using the triangle inequality as well as $\norm{u-\widetilde u}{1} = \norm{(1-Q_\TT^\star)f}{-1}$ we infer that
  \begin{align*}
    \norm{u-u_\TT}{} + \norm{\ssigma-\ssigma_\TT}{} \lesssim \norm{(1-Q_\TT^\star)f}{-1} + \min_{(v,\ttau)\in W_\TT\times \Sigma_\TT} \norm{u-v}{} + \norm{\ssigma-\ttau}{}.
  \end{align*}
  The proof is finished by applying Lemma~\ref{lem:aux}.
\end{proof}

For the scheme~\eqref{eq:mixed:standard} with $f\in L^2(\Omega)$ a quasi-best approximation in the form
\begin{align}\label{eq:L2quasioptSigma}
  \norm{\ssigma-\ssigma_\TT}{} = \min_{\ttau\in \RT^0(\TT), \,\div\ttau=\Pi_\TT^0\div\ssigma}{} \norm{\ssigma-\ttau}{}
\end{align}
where $\ssigma\in\Hdivset\Omega$ is the solution to~\eqref{eq:fo} and $(u_\TT,\ssigma_\TT)\in W_\TT\times \Sigma_\TT$ is the solution of~\eqref{eq:mixed:standard} is known, see, e.g.~\cite{BoffiBrezziFortin} or~\cite[Lemma~6.1]{egsv22}. 
Note that the infimum is taken over a restricted set. 
For the modified version of the mixed method we have the following variant.
\begin{theorem}
 Let $Q_\TT^\star\colon H^{-1}(\Omega)\to W_\TT\subseteq H^{-1}(\Omega)$ denote a bounded projection.
  Given $f\in H^{-1}(\Omega)$, let $(u,\ssigma)\in H_0^1(\Omega)\times L^2(\Omega;\R^n)$ denote the solution of~\eqref{eq:fo}.
  The unique solution $(u_\TT,\ssigma_\TT)\in W_\TT\times \Sigma_\TT$ of~\eqref{eq:mixed:mod} satisfies
  \begin{align*}
    \norm{\ssigma-\ssigma_\TT}{} \lesssim \min_{\ttau\in\Sigma_\TT} \norm{\ssigma-\ttau}{}.
  \end{align*}
\end{theorem}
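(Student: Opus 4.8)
The plan is to reduce the claim to the flux-only quasi-best approximation~\eqref{eq:L2quasioptSigma} for the \emph{standard} mixed FEM with $L^2(\Omega)$ data. The starting observation is that the right-hand side of the modified scheme~\eqref{eq:mixed:mod} only sees the regularized load $Q_\TT^\star f\in\PP^0(\TT)\subseteq L^2(\Omega)$; hence the discrete solution $(u_\TT,\ssigma_\TT)$ coincides with the solution of the \emph{unmodified} mixed FEM~\eqref{eq:mixed:standard} for the $L^2(\Omega)$ datum $Q_\TT^\star f$. Accordingly, I would introduce the auxiliary function $\widetilde u\in H_0^1(\Omega)$ with $-\Delta\widetilde u = Q_\TT^\star f$ (as in the proof of Theorem~\ref{thm:quasiopt:mixed}) and set $\widetilde\ssigma:=\nabla\widetilde u\in\Hdivset\Omega$. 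Then $(\widetilde u,\widetilde\ssigma)$ is the exact solution of~\eqref{eq:fo} with load $Q_\TT^\star f$, and $\ssigma_\TT$ is precisely its standard mixed approximation.

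With this identification,~\eqref{eq:L2quasioptSigma} applies verbatim to $\widetilde\ssigma$ and yields
\begin{align*}
  \norm{\widetilde\ssigma-\ssigma_\TT}{} = \min_{\ttau\in\Sigma_\TT,\ \div\ttau=\Pi_\TT^0\div\widetilde\ssigma}\norm{\widetilde\ssigma-\ttau}{}.
\end{align*}
The next step is to bound this constrained minimum by the \emph{unconstrained} best-approximation error. To do so I would invoke the Raviart--Thomas projector $\Pi_\TT^{\RT}$: by~\eqref{eq:propRTproj} it satisfies $\div\Pi_\TT^{\RT}\widetilde\ssigma=\Pi_\TT^0\div\widetilde\ssigma$, so $\Pi_\TT^{\RT}\widetilde\ssigma$ is admissible in the constrained set and therefore $\norm{\widetilde\ssigma-\ssigma_\TT}{}\le\norm{\widetilde\ssigma-\Pi_\TT^{\RT}\widetilde\ssigma}{}$. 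Crucially, $\div\widetilde\ssigma=-Q_\TT^\star f\in\PP^0(\TT)$ is piecewise constant, so the oscillation term $\norm{h_\TT(1-\Pi_\TT^0)\div\widetilde\ssigma}{}$ in the second estimate of~\eqref{eq:propRTproj} vanishes, leaving only the elementwise best $\RT^0$-approximation, which is dominated by the global best $\Sigma_\TT$-approximation. Hence $\norm{\widetilde\ssigma-\ssigma_\TT}{}\lesssim\min_{\ttau\in\Sigma_\TT}\norm{\widetilde\ssigma-\ttau}{}$.

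It remains to replace $\widetilde\ssigma$ by $\ssigma$, which I would do exactly as in Theorem~\ref{thm:quasiopt:mixed}. Since $\ssigma=\nabla u$, $\widetilde\ssigma=\nabla\widetilde u$ and $f=-\Delta u$, one has $\norm{\ssigma-\widetilde\ssigma}{}=\norm{u-\widetilde u}{1}=\norm{(1-Q_\TT^\star)f}{-1}=\norm{(1-Q_\TT^\star)\Delta u}{-1}$, which by Lemma~\ref{lem:aux} is $\lesssim\min_{\ttau\in\Sigma_\TT}\norm{\ssigma-\ttau}{}$. A triangle inequality gives $\norm{\ssigma-\ssigma_\TT}{}\le\norm{\ssigma-\widetilde\ssigma}{}+\norm{\widetilde\ssigma-\ssigma_\TT}{}$, and inserting the $\Sigma_\TT$-minimizer of $\ssigma$ into the bound for $\norm{\widetilde\ssigma-\ssigma_\TT}{}$ shows $\min_{\ttau\in\Sigma_\TT}\norm{\widetilde\ssigma-\ttau}{}\lesssim\norm{\ssigma-\widetilde\ssigma}{}+\min_{\ttau\in\Sigma_\TT}\norm{\ssigma-\ttau}{}\lesssim\min_{\ttau\in\Sigma_\TT}\norm{\ssigma-\ttau}{}$. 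Combining these estimates yields the claim.

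The step I expect to be the main obstacle is the first reduction: \eqref{eq:L2quasioptSigma} is an \emph{equality} over a divergence-constrained subset of $\Sigma_\TT$, which a priori could exceed the unconstrained best-approximation error. The key that unlocks the argument is that the regularized load renders $\div\widetilde\ssigma$ piecewise constant, so the Raviart--Thomas projector reproduces the divergence constraint with a vanishing data-oscillation contribution; once this is in place, the remaining steps are only triangle inequalities together with the already-established Lemma~\ref{lem:aux}.
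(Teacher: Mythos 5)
Your proposal is correct and follows essentially the same route as the paper: the auxiliary solution $\widetilde u$ with $-\Delta\widetilde u=Q_\TT^\star f$, the constrained quasi-best approximation~\eqref{eq:L2quasioptSigma} applied to $\widetilde\ssigma$, the Raviart--Thomas projector $\Pi_\TT^{\RT}$ with vanishing oscillation term (since $\div\widetilde\ssigma\in\PP^0(\TT)$), and the final triangle inequality combined with Lemma~\ref{lem:aux}. Your explicit remark that the modified scheme coincides with the standard mixed FEM for the $L^2(\Omega)$ datum $Q_\TT^\star f$ is the (implicit) justification the paper relies on as well.
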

\begin{proof}
  Let $\widetilde u\in H_0^1(\Omega)$ solve $-\Delta \widetilde u = Q_\TT^\star f$, $\widetilde\ssigma:=\nabla\widetilde u$ and note that with quasi-optimality~\eqref{eq:L2quasioptSigma} and~\eqref{eq:propRTproj}
  \begin{align*}
    \norm{\widetilde\ssigma-\ssigma_\TT}{} = \min_{\ttau\in \RT^0(\TT), \,\div\ttau=\Pi_\TT^0\div\widetilde\ssigma}{}
    \norm{\widetilde\ssigma-\ttau}{} \leq \norm{\widetilde\ssigma-\Pi_\TT^{\RT}\widetilde\ssigma}{} \lesssim \min_{\ttau\in \Sigma_\TT}\norm{\widetilde\ssigma-\ttau}{}.
  \end{align*}
  Employing the triangle inequality twice we see that
  \begin{align*}
    \norm{\ssigma-\ssigma_\TT}{} \lesssim \norm{\ssigma-\widetilde\ssigma}{} + \min_{\ttau\in \Sigma_\TT}\norm{\widetilde\ssigma-\ttau}{}
    \lesssim \norm{\ssigma-\widetilde\ssigma}{} + \min_{\ttau\in \Sigma_\TT}\norm{\ssigma-\ttau}{}.
  \end{align*}
  The proof is concluded by observing that $\norm{\ssigma-\widetilde\ssigma}{}=\norm{(1-Q_\TT^\star)f}{-1}$ and applying Lemma~\ref{lem:aux}.
\end{proof}

To derive convergence rates we require additional regularity of the load and the regularity shift of the Poisson problem. 
Suppose that $u\in H_0^1(\Omega)$ is the solution of~\eqref{eq:fo}. Then, by elliptic regularity, see, e.g.,~\cite{grisvard,dauge88}, there exists $s_\Omega\in(1/2,1]$ depending only on $\Omega$ such that
\begin{align}\label{eq:ellipticreg}
  \norm{u}{1+t} \lesssim \norm{\Delta u}{-1+t} 
\end{align}
for all $t\in[0,s_\Omega]$ with $\Delta u \in H^{-1+t}(\Omega)$.
\begin{corollary}\label{cor:quasiopt:mixed}
  Under the assumptions of Theorem~\ref{thm:quasiopt:mixed} suppose additionally that $f\in H^{-1+s}(\Omega)$ for some $s\in[0,1]$. 
  Then, 
  \begin{align*}
    \norm{u-u_\TT}{} + \norm{\ssigma-\ssigma_\TT{}}{} \lesssim h^{\min\{s,s_\Omega\}} \norm{f}{-1+\min\{s,s_\Omega\}}.
  \end{align*}
\end{corollary}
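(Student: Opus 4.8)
The plan is to reduce everything to the quasi-optimality estimate of Theorem~\ref{thm:quasiopt:mixed} and then bound the resulting best-approximation error by combining elliptic regularity with the approximation power of the elementwise $L^2$-projection $\Pi_\TT^0$. Set $t:=\min\{s,s_\Omega\}$ and write $h:=\max_{T\in\TT}h_T$. Since $f\in H^{-1+s}(\Omega)\hookrightarrow H^{-1+t}(\Omega)$ and $t\le s_\Omega$, the elliptic regularity shift~\eqref{eq:ellipticreg} applied to $-\Delta u=f$ yields $u\in H^{1+t}(\Omega)$ with
\begin{align*}
  \norm{u}{1+t} \lesssim \norm{\Delta u}{-1+t} = \norm{f}{-1+t}.
\end{align*}
By Theorem~\ref{thm:quasiopt:mixed} it therefore suffices to estimate $\min_{(v,\ttau)}\big(\norm{u-v}{}+\norm{\ssigma-\ttau}{}\big)$ against $h^t\norm{u}{1+t}$.

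For the two terms I would make concrete choices of the approximants. For the scalar variable I take $v=\Pi_\TT^0 u\in W_\TT$; for the flux, since $\PP^0(\TT;\R^n)\subseteq\RT^0(\TT)=\Sigma_\TT$, I take $\ttau=\Pi_\TT^0\ssigma$ (componentwise), where $\ssigma=\nabla u$. Both best-approximation errors are then controlled by the $L^2(\Omega)$-approximation errors $\norm{(1-\Pi_\TT^0)u}{}$ and $\norm{(1-\Pi_\TT^0)\nabla u}{}$. Using $H^{1+t}(\Omega)\hookrightarrow H^t(\Omega)$ and the boundedness of $\nabla\colon H^{1+t}(\Omega)\to H^t(\Omega;\R^n)$, it remains to bound $\norm{(1-\Pi_\TT^0)\phi}{}$ for $\phi\in H^t(\Omega)$.

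The main (and essentially only) subtlety is to obtain the \emph{fractional} rate $h^t$ for $t\in[0,1]$, since the excerpt records only the first-order endpoint estimate $\norm{(1-\Pi_\TT^0)\phi}{T}\lesssim h_T\norm{\nabla\phi}{T}$. Summing the squared local estimates gives the global bound $\norm{(1-\Pi_\TT^0)\phi}{}\lesssim h\,\norm{\phi}{1}$, while $L^2$-stability of $\Pi_\TT^0$ trivially gives $\norm{(1-\Pi_\TT^0)\phi}{}\lesssim\norm{\phi}{}$. Interpreting $1-\Pi_\TT^0$ as an operator that is bounded $L^2(\Omega)\to L^2(\Omega)$ with norm $\lesssim 1$ and $H^1(\Omega)\to L^2(\Omega)$ with norm $\lesssim h$, real interpolation between these endpoints together with $H^t(\Omega)=[L^2(\Omega),H^1(\Omega)]_t$ yields $\norm{(1-\Pi_\TT^0)\phi}{}\lesssim h^t\norm{\phi}{t}$, the implied constant depending only on $\kappa_\TT$ and $\Omega$. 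Applying this to $\phi=u$ and to each component of $\nabla u$, and inserting the regularity estimate $\norm{u}{1+t}\lesssim\norm{f}{-1+t}=\norm{f}{-1+\min\{s,s_\Omega\}}$, closes the argument.
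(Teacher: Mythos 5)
There is a genuine gap at the flux approximation step. You choose $\ttau=\Pi_\TT^0\ssigma$ componentwise and justify its admissibility by ``$\PP^0(\TT;\R^n)\subseteq\RT^0(\TT)$''. This inclusion is false for the \emph{conforming} space $\Sigma_\TT=\RT^0(\TT)\subset\Hdivset\Omega$ over which the minimum in Theorem~\ref{thm:quasiopt:mixed} is taken: a piecewise constant vector field generically has discontinuous normal components across interior faces, hence does not belong to $\Hdivset\Omega$. What is true (and what the paper uses) is only the \emph{local} inclusion $\PP^0(T)^n\subseteq\RT^0(T)$ on each element, which enters through the error bound~\eqref{eq:propRTproj} for the conforming quasi-interpolant $\Pi_\TT^{\RT}$. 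Moreover, the obvious repair --- replacing $\Pi_\TT^0\ssigma$ by a conforming Raviart--Thomas interpolant of $\ssigma$ --- does not go through directly either, because $\div\ssigma=-f\in H^{-1+s}(\Omega)$ need not lie in $L^2(\Omega)$, so $\ssigma\notin\Hdivset\Omega$ and $\Pi_\TT^{\RT}\ssigma$ is not even defined; the classical RT interpolant would additionally require normal-trace regularity that is unavailable for small $s$. This is precisely why the paper's proof introduces the auxiliary solution $\widetilde u$ of $-\Delta\widetilde u=Q_\TT^\star f$ with $\widetilde\ssigma=\nabla\widetilde u\in\Hdivset\Omega$, applies $\Pi_\TT^{\RT}$ to $\widetilde\ssigma$ (so that the divergence-oscillation term in~\eqref{eq:propRTproj} vanishes), and then transfers back to $\ssigma$ via $\norm{\ssigma-\widetilde\ssigma}{}=\norm{(1-Q_\TT^\star)f}{-1}\lesssim\norm{(1-\QCL_\TT)f}{-1}\lesssim h^{t}\norm{f}{-1+t}$.

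The remaining ingredients of your argument are fine and coincide with the paper's: the scalar part via $\Pi_\TT^0 u$, the elliptic shift $\norm{u}{1+t}\lesssim\norm{f}{-1+t}$, and the operator-interpolation argument giving $\norm{(1-\Pi_\TT^0)\phi}{}\lesssim h^{t}\norm{\phi}{t}$, which the paper also invokes (implicitly) to bound $\norm{\ssigma-\Pi_\TT^0\ssigma}{}$. But the step from these local, nonconforming piecewise-constant approximations to an admissible competitor in $\Sigma_\TT$ is exactly where the real work lies, and your proposal skips it.
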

\begin{proof}
  Quasi-optimality of Theorem~\ref{thm:quasiopt:mixed} and approximation properties of $\Pi_\TT^0$ imply that
  \begin{align*}
    \norm{u-u_\TT}{} + \norm{\ssigma-\ssigma_\TT}{} &\lesssim h\norm{\nabla u}{} + \min_{\ttau\in\Sigma_\TT} \norm{\ssigma-\ttau}{}.
  \end{align*}
  For the first term note that $\norm{\nabla u}{}=\norm{f}{-1}$.
  The other term can be bounded as in~\cite[Theorem~15]{MINRESsingular}. We give the details for the sake of completeness. 
  Note that 
  \begin{align*}
    \norm{\ssigma-\ttau}{} \leq \norm{\ssigma-\widetilde\ssigma}{} + \norm{\widetilde\ssigma-\ttau} \quad\forall \ttau\in \Sigma_\TT,
  \end{align*}
  where $\widetilde\ssigma = \nabla\widetilde u$ and $\widetilde u\in H_0^1(\Omega)$ solves $\Delta\widetilde u = -Q_\TT^\star f$. 
  By the projection property we have $(1-Q_\TT^\star)\QCL_\TT f =0$.
  Then, 
  \begin{align*}
    \norm{\ssigma-\widetilde\ssigma}{} = \norm{(1-Q_\TT^\star)f}{-1} \lesssim\norm{(1-\QCL_\TT)f}{-1} \lesssim h^{\min\{s,s_\Omega\}}\norm{f}{-1+\min\{s,s_\Omega\}}.
  \end{align*}
  The last estimate follows by an interpolation argument and Proposition~\ref{prop:QCL}. Finally, using~\eqref{eq:propRTproj} and choosing $\ttau=\Pi_\TT^{\RT}\widetilde\ssigma$ we obtain
  \begin{align*}
    \norm{\widetilde\ssigma-\ttau}{}^2 \lesssim \sum_{T\in\TT}\min_{\cchi\in \RT^0(T)} \norm{\widetilde\ssigma-\cchi}{T}^2 \lesssim 
    \norm{\ssigma-\widetilde\ssigma}{}^2 + \norm{\ssigma-\Pi_\TT^0\ssigma}{}^2
  \end{align*}
  by noting that $\PP^0(T)^n\subseteq \RT^0(T)$. The first term on the right-hand side is estimated as before and for the remaining term we get together with approximation properties of piecewise constants and elliptic regularity that $\norm{\ssigma-\Pi_\TT^0\ssigma}{}\lesssim h^{\min\{s,s_\Omega\}}\norm{f}{-1+\min\{s,s_\Omega\}}$. This finishes the proof.
\end{proof}

Next, we analyze quasi-optimality of the modified FOSLS~\eqref{eq:fosls:mod} in weaker norms.
The proof of the following result is similar to the proof of Theorem~\ref{thm:quasiopt:mixed}. 
\begin{theorem}\label{thm:quasiopt:fosls}
  Let $Q_\TT^\star\colon H^{-1}(\Omega)\to W_\TT\subseteq H^{-1}(\Omega)$ denote a bounded projection.
  Given $f\in H^{-1}(\Omega)$, let $(u,\ssigma)\in H_0^1(\Omega)\times L^2(\Omega;\R^n)$ denote the solution of~\eqref{eq:fo}.
  The unique solution $(u_\TT,\ssigma_\TT)\in U_\TT\times \Sigma_\TT$ of~\eqref{eq:fosls:mod} satisfies
  \begin{align*}
    \norm{u-u_\TT}{1} + \norm{\ssigma-\ssigma_\TT}{} \lesssim \min_{(v,\ttau)\in U_\TT\times \Sigma_\TT} \norm{u-v}{1} + \norm{\ssigma-\ttau}{}.
  \end{align*}
\end{theorem}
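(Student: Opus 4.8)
The plan is to mirror the proof of Theorem~\ref{thm:quasiopt:mixed}: regularize the load to reduce the $H^{-1}(\Omega)$ problem to a consistent $L^2(\Omega)$ one, invoke the standard FOSLS theory for the regularized problem, and then absorb the regularization error via Lemma~\ref{lem:aux}. Concretely, I would first introduce $\widetilde u\in H_0^1(\Omega)$ solving $-\Delta\widetilde u = Q_\TT^\star f$ and set $\widetilde\ssigma := \nabla\widetilde u$. Since $Q_\TT^\star f\in\PP^0(\TT)\subseteq L^2(\Omega)$, the minimization in~\eqref{eq:fosls:mod} is exactly the standard FOSLS~\eqref{eq:fosls:standard} with the $L^2(\Omega)$ datum $Q_\TT^\star f$, and $(\widetilde u,\widetilde\ssigma)$ is its exact continuous solution, so the least-squares functional vanishes there and the method is consistent. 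A triangle inequality then splits
\[
  \norm{u-u_\TT}{1}+\norm{\ssigma-\ssigma_\TT}{} \leq \norm{u-\widetilde u}{1}+\norm{\ssigma-\widetilde\ssigma}{} + \norm{\widetilde u-u_\TT}{1}+\norm{\widetilde\ssigma-\ssigma_\TT}{},
\]
where the first two terms both equal $\norm{(1-Q_\TT^\star)f}{-1}$ because $\norm{u-\widetilde u}{1}=\norm{\Delta(u-\widetilde u)}{-1}$ and $\ssigma-\widetilde\ssigma=\nabla(u-\widetilde u)$.

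For the discrete error $\norm{\widetilde u-u_\TT}{1}+\norm{\widetilde\ssigma-\ssigma_\TT}{}$ I would use the coercivity of the FOSLS functional, i.e.\ the norm equivalence $\enorm{(v,\ttau)}^2 := \norm{\nabla v-\ttau}{}^2+\norm{\div\ttau}{}^2 \eqsim \norm{v}{1}^2+\norm{\ttau}{\Hdivset\Omega}^2$ (see~\cite{BochevGunzberger09}), which together with the symmetry of the associated bilinear form yields the usual C\'ea-type quasi-optimality in the energy norm. Evaluated at $(v,\ttau)$ the functional equals $\norm{\nabla v-\ttau}{}^2+\norm{\div\ttau+Q_\TT^\star f}{}^2$, hence
\[
  \norm{\widetilde u-u_\TT}{1}+\norm{\widetilde\ssigma-\ssigma_\TT}{} \lesssim \min_{(v,\ttau)\in U_\TT\times\Sigma_\TT} \norm{\nabla v-\ttau}{}+\norm{\div\ttau+Q_\TT^\star f}{}.
\]
The decisive step is the choice $\ttau=\Pi_\TT^{\RT}\widetilde\ssigma$: by~\eqref{eq:propRTproj} and $\div\widetilde\ssigma=-Q_\TT^\star f\in\PP^0(\TT)$ one has $\div\Pi_\TT^{\RT}\widetilde\ssigma=\Pi_\TT^0\div\widetilde\ssigma=-Q_\TT^\star f$, so the divergence residual vanishes, and the remaining $L^2(\Omega)$ term satisfies $\norm{\widetilde\ssigma-\Pi_\TT^{\RT}\widetilde\ssigma}{}\lesssim\min_{\ttau\in\Sigma_\TT}\norm{\widetilde\ssigma-\ttau}{}$ since the oscillation contribution $\norm{h_\TT(1-\Pi_\TT^0)\div\widetilde\ssigma}{}$ is zero. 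Taking $v$ optimal in $U_\TT$ and a further triangle inequality then gives the bound $\lesssim\min_{v\in U_\TT}\norm{\widetilde u-v}{1}+\min_{\ttau\in\Sigma_\TT}\norm{\widetilde\ssigma-\ttau}{}$.

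Finally I would pass from $(\widetilde u,\widetilde\ssigma)$ back to $(u,\ssigma)$ by the triangle inequalities $\norm{\widetilde u-v}{1}\leq\norm{u-v}{1}+\norm{u-\widetilde u}{1}$ and $\norm{\widetilde\ssigma-\ttau}{}\leq\norm{\ssigma-\ttau}{}+\norm{\ssigma-\widetilde\ssigma}{}$, which introduces additional copies of $\norm{(1-Q_\TT^\star)f}{-1}$. Collecting everything leaves
\[
  \norm{u-u_\TT}{1}+\norm{\ssigma-\ssigma_\TT}{} \lesssim \norm{(1-Q_\TT^\star)f}{-1} + \min_{(v,\ttau)\in U_\TT\times\Sigma_\TT}\norm{u-v}{1}+\norm{\ssigma-\ttau}{},
\]
and since $\Delta u=-f$, Lemma~\ref{lem:aux} bounds $\norm{(1-Q_\TT^\star)f}{-1}=\norm{(1-Q_\TT^\star)\Delta u}{-1}\lesssim\min_{\ttau\in\Sigma_\TT}\norm{\nabla u-\ttau}{}=\min_{\ttau\in\Sigma_\TT}\norm{\ssigma-\ttau}{}$, which is absorbed into the right-hand side. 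I expect the main obstacle to be precisely this reduction from the $\Hdivset\Omega$-type best approximation that FOSLS coercivity naturally produces to the weaker $L^2(\Omega)$ best approximation $\min_{\ttau\in\Sigma_\TT}\norm{\ssigma-\ttau}{}$ claimed in the statement; this is what forces the use of the commuting, divergence-preserving projector $\Pi_\TT^{\RT}$ together with the fact that $\div\widetilde\ssigma$ is piecewise constant, so that no mesh-size weighted oscillation term survives.
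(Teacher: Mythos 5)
Your proposal is correct and follows essentially the same route as the paper's proof: introduce the auxiliary solution $\widetilde u$ of $-\Delta\widetilde u = Q_\TT^\star f$, invoke quasi-optimality of the standard FOSLS for the $L^2$ datum $Q_\TT^\star f$, kill the divergence residual and the oscillation term by choosing $\ttau=\Pi_\TT^{\RT}\widetilde\ssigma$ (using $\div\widetilde\ssigma=-Q_\TT^\star f\in\PP^0(\TT)$ and~\eqref{eq:propRTproj}), and absorb $\norm{(1-Q_\TT^\star)f}{-1}$ via Lemma~\ref{lem:aux}. The only difference is that you spell out the coercivity/C\'ea argument that the paper cites as known, which does not change the substance.
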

\begin{proof}
We use the notation from the proof of Theorem~\eqref{thm:quasiopt:mixed}. Let $\widetilde u\in H_0^1(\Omega)$ denote the weak solution of $\Delta\widetilde u = -Q_\TT^\star f$. 
With the triangle inequality, $\norm{u-\widetilde u}1=\norm{(1-Q_\TT^\star)f}{-1}$, $\ssigma=\nabla u$, $\widetilde\ssigma = \nabla\widetilde u$, and the quasi-optimality of the FOSLS in the canonic norms we get
\begin{align*}
  \norm{u-u_\TT}{1} + \norm{\ssigma-\ssigma_\TT}{} &\lesssim \norm{(1-Q_\TT^\star )f}{-1} + \min_{(v,\ttau)\in U_\TT\times\Sigma_\TT}
  \norm{\widetilde u-v}1 + \norm{\widetilde\ssigma-\ttau}{} + \norm{Q_\TT^\star f+\div\ttau}{}.
\end{align*}
We argue as in the proof of Theorem~\ref{thm:quasiopt:mixed} and obtain
\begin{align*}
  \min_{\ttau\in \Sigma_\TT} \norm{\widetilde\ssigma-\ttau}{} + \norm{Q_\TT^\star f+\div\ttau}{} 
  \lesssim \min_{\ttau\in\Sigma_\TT} \norm{\nabla\widetilde u-\ttau}{} \leq \norm{u-\widetilde u}1 + \min_{\ttau\in\Sigma_\TT} \norm{\nabla\widetilde u-\ttau}{}.
\end{align*}
Using $\norm{\widetilde u-v}1\leq \norm{u-v}1 + \norm{u-\widetilde u}1 = \norm{u-v}1 + \norm{(1-Q_\TT^\star)f}{-1}$ and putting all the estimates together we infer
\begin{align*}
  \norm{u-u_\TT}{1} + \norm{\ssigma-\ssigma_\TT}{} &\lesssim  \norm{(1-Q_\TT^\star)f}{-1} + \min_{(v,\ttau)\in U_\TT\times\Sigma_\TT} \norm{u-v}1 + \norm{\ssigma-\ttau}{}
\end{align*}
and the proof is finished with an application of Lemma~\ref{lem:aux}.
\end{proof}

Convergence rates in terms of powers of the maximum mesh-size $h$ for the modified FOSLS~\eqref{eq:fosls:mod} have already been proved in~\cite[Theorem~15]{MINRESsingular}. For completeness we recall the result.
\begin{corollary}\label{cor:quasiopt:fosls}
  Under the assumptions of Theorem~\ref{thm:quasiopt:fosls} suppose additionally that $f\in H^{-1+s}(\Omega)$ for some $s\in[0,1]$.
  Then, 
  \begin{align*}
    \norm{u-u_\TT}{1} + \norm{\ssigma-\ssigma_\TT{}}{} \lesssim h^{\min\{s,s_\Omega\}} \norm{f}{-1+\min\{s,s_\Omega\}}.
  \end{align*}
\end{corollary}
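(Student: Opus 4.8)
The plan is to feed the quasi-optimality estimate of Theorem~\ref{thm:quasiopt:fosls} into elliptic regularity and standard approximation theory, following the pattern of the proof of Corollary~\ref{cor:quasiopt:mixed}. Set $r:=\min\{s,s_\Omega\}$. Theorem~\ref{thm:quasiopt:fosls} splits the error into two decoupled best-approximation terms,
\[
  \norm{u-u_\TT}{1}+\norm{\ssigma-\ssigma_\TT}{}\lesssim\min_{v\in U_\TT}\norm{u-v}{1}+\min_{\ttau\in\Sigma_\TT}\norm{\ssigma-\ttau}{}.
\]
Since $-\Delta u=f\in H^{-1+s}(\Omega)\subseteq H^{-1+r}(\Omega)$ and $r\le s_\Omega$, the shift estimate~\eqref{eq:ellipticreg} gives $u\in H^{1+r}(\Omega)$ with $\norm{u}{1+r}\lesssim\norm{f}{-1+r}$, hence $\ssigma=\nabla u\in H^{r}(\Omega;\R^n)$ with $\norm{\ssigma}{r}\lesssim\norm{u}{1+r}$.

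For the scalar term I would insert an $H^1$-stable quasi-interpolant into $U_\TT=\PP^1(\TT)\cap H_0^1(\Omega)$ that reproduces $\PP^1(\TT)$, e.g.\ the Scott--Zhang operator $\JSZ_\TT$; real interpolation between its stability ($r=0$) and its first-order estimate ($r=1$) yields $\min_{v\in U_\TT}\norm{u-v}{1}\lesssim h^{r}\norm{u}{1+r}\lesssim h^{r}\norm{f}{-1+r}$. This is the step that genuinely uses the elevated $H^{1+r}$-regularity of $u$, in contrast to the mixed case where the scalar term was measured only in $L^2(\Omega)$.

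The flux term is treated exactly as in the proof of Corollary~\ref{cor:quasiopt:mixed}. I would introduce the auxiliary $\widetilde u\in H_0^1(\Omega)$ with $-\Delta\widetilde u=Q_\TT^\star f$ and $\widetilde\ssigma:=\nabla\widetilde u$, so that $\div\widetilde\ssigma=-Q_\TT^\star f\in\PP^0(\TT)$. Then $\min_{\ttau\in\Sigma_\TT}\norm{\ssigma-\ttau}{}\le\norm{\ssigma-\widetilde\ssigma}{}+\norm{\widetilde\ssigma-\Pi_\TT^{\RT}\widetilde\ssigma}{}$. The first summand equals $\norm{(1-Q_\TT^\star)f}{-1}$, which, using $(1-Q_\TT^\star)\QCL_\TT f=0$, Proposition~\ref{prop:QCL} and an interpolation argument, is $\lesssim h^{r}\norm{f}{-1+r}$; for the second I would use~\eqref{eq:propRTproj} (whose mesh-size term vanishes because $\div\widetilde\ssigma\in\PP^0(\TT)$) together with $\PP^0(T)^n\subseteq\RT^0(T)$ to reduce to $\norm{\ssigma-\Pi_\TT^0\ssigma}{}\lesssim h^{r}\norm{\ssigma}{r}$.

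Adding the two bounds and using $\norm{\ssigma}{r}\lesssim\norm{u}{1+r}\lesssim\norm{f}{-1+r}$ gives the claim. The only point requiring care --- and the reason the flux term cannot be attacked head-on --- is that $\ssigma=\nabla u$ is generally not in $\Hdivset\Omega$ when $f\notin L^2(\Omega)$, so the Raviart--Thomas projector $\Pi_\TT^{\RT}$ may not be applied to $\ssigma$ directly; the detour through $\widetilde\ssigma$, whose divergence is piecewise constant, circumvents this, just as in Corollary~\ref{cor:quasiopt:mixed}. Beyond this, the estimates are routine real-interpolation arguments between the integer endpoints $r=0$ and $r=1$.
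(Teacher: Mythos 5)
Your argument is correct, but it is worth noting that the paper does not actually prove this corollary: it simply recalls the rate from~\cite[Theorem~15]{MINRESsingular}, where it was obtained directly for the modified FOSLS rather than as a consequence of quasi-optimality. What you do instead is derive the rate from Theorem~\ref{thm:quasiopt:fosls}, transplanting the proof of Corollary~\ref{cor:quasiopt:mixed} to the FOSLS setting. Your treatment of the flux term is identical to that proof (detour through $\widetilde\ssigma=\nabla\widetilde u$ with $\div\widetilde\ssigma=-Q_\TT^\star f\in\PP^0(\TT)$, the projection trick $(1-Q_\TT^\star)\QCL_\TT f=0$ combined with Proposition~\ref{prop:QCL} and interpolation, then~\eqref{eq:propRTproj} with $\PP^0(T)^n\subseteq\RT^0(T)$), and you correctly identify the one genuinely new ingredient relative to the mixed case: the scalar error is now measured in $H^1$, so you need the elevated regularity $u\in H^{1+r}(\Omega)$ from~\eqref{eq:ellipticreg} together with a first-order $H^1$ quasi-interpolation estimate and real interpolation, rather than the plain $\Pi_\TT^0$ bound $h\norm{\nabla u}{}$ used in Corollary~\ref{cor:quasiopt:mixed}. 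Your closing observation that $\ssigma=\nabla u$ need not lie in $\Hdivset\Omega$ for $f\notin L^2(\Omega)$, which is what forces the detour through $\widetilde\ssigma$ before applying $\Pi_\TT^{\RT}$, is exactly the right point of care. What your route buys is a self-contained proof within this paper's framework; what the citation buys the author is brevity, at the cost of leaning on the earlier direct argument.
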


\section{Modified Cl\'ement quasi-interpolator}\label{sec:clement}
Define the Cl\'ement quasi-interpolator by
\begin{align*}
  \JCL_\TT v = \sum_{z\in\VV_0} \ip{v}{\chi_z} \eta_z
\end{align*}
with $0$-th order moments, i.e., $\chi_z \in \PP^0(\TT)$ with $\supp(\chi_z)\subseteq \overline{\Patch_z}$ and
\begin{align*}
  \chi_z|_{\Patch_z} = \frac{1}{|\Patch_z|}.
\end{align*}
This Cl\'ement quasi-interpolator has first-order approximation properties, i.e., 
\begin{align*}
  \norm{v-\JCL_\TT v}{T} \lesssim h_T \norm{\nabla v}{\Patch_T}.
\end{align*}
It can be seen by noting that the operator reproduces constants on the patch $\patch_z$ at vertex $z\in\VV_0$, see, e.g.,~\cite{CarstensenClement}.
However, the operator $\JCL_\TT$, in general, does not have second-order approximation properties. 
Below we define a weighted Cl\'ement quasi-interpolator with second-order approximation properties.

There is a simple relation between $\JSZ_\TT$ and $\JCL_\TT$, namely, 
\begin{align*}
  \JSZ_\TT\Pi_\TT^0 = \JCL_\TT,
\end{align*}
see,~\cite[Lemma~21]{MINRESsingular}. Together with $\Bop_\TT\Pi_\TT^0=\Bop_\TT$ (which follows from the definition of $\Bop_\TT$) one sees that
\begin{align*}
  \QCL_\TT = \Pi_\TT^0\big(\JSZ_\TT+\Bop_\TT(1-\JSZ_\TT)\big)' = (\JCL_\TT)' + (1-\JCL_\TT)'\Bop_\TT',
\end{align*}
where
\begin{align*}
  (\JCL_\TT)'\phi = \sum_{z\in\VV_0} \ip{\phi}{\eta_z} \chi_z.
\end{align*}

If the mesh satisfies a certain symmetry condition, then it can be shown that $\JCL_\TT$ also has second-order approximation properties
although its argument is averaged over a nodal patch.
To that end, given $T\in\TT$ let $s_T = (n+1)^{-1}\sum_{z\in\VV_T}z$ denote its center of mass.
For any $z\in\VV_0$ the centroid of its patch $\Patch_z$ is given by
\begin{align*}
  s_z = \frac{1}{|\Patch_z|}\sum_{T\in\patch_z} |T|\,s_T.
\end{align*}
The following result is found in~\cite[Lemma~22]{MINRESsingular}:
\begin{proposition}
  If $s_z=z$ for all $z\in\VV_0$, then
  \begin{align*}
    \norm{(1-\JCL_\TT)v}{} \lesssim h^2\norm{D^2 v}{} \quad\text{for all } v\in H^2(\Omega)\cap H_0^1(\Omega). 
  \end{align*}
\end{proposition}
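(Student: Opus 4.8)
The plan is to compare $\JCL_\TT$ with the standard nodal interpolant, which isolates the single point where the symmetry hypothesis is needed. Since we work with $n\le 3$, the embedding $H^2(\Omega)\hookrightarrow C^0(\overline\Omega)$ holds, so for $v\in H^2(\Omega)\cap H_0^1(\Omega)$ the point values $v(z)$ are well defined and, because $v$ vanishes on $\Gamma$, satisfy $v(z)=0$ at every boundary vertex. Hence the nodal interpolant takes the form $I_\TT v = \sum_{z\in\VV_0} v(z)\,\eta_z$, a sum over interior vertices only, matching exactly the structure of $\JCL_\TT v = \sum_{z\in\VV_0}\ip{v}{\chi_z}\eta_z$. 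I would therefore use the triangle inequality
\begin{align*}
  \norm{(1-\JCL_\TT)v}{} \leq \norm{(1-I_\TT)v}{} + \norm{(I_\TT-\JCL_\TT)v}{}
\end{align*}
and bound the two terms separately.

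For the first term I would invoke the classical second-order estimate for nodal interpolation, $\norm{(1-I_\TT)v}{}\lesssim h^2\norm{D^2 v}{}$, which is valid precisely because $n\le 3$ makes $I_\TT$ well defined on $H^2$. The symmetry condition enters only through the second term. Writing it elementwise, $(I_\TT-\JCL_\TT)v|_T = \sum_{z\in\VV_T\cap\VV_0}\big(v(z)-\ip{v}{\chi_z}\big)\eta_z|_T$, I note that $\ip{v}{\chi_z}$ is the mean of $v$ over $\Patch_z$, which for an affine polynomial $p$ equals $p(s_z)$, where $s_z$ is the centroid of $\Patch_z$; the hypothesis $s_z=z$ gives $\ip{p}{\chi_z}=p(z)$. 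Consequently, for any affine $p$,
\begin{align*}
  v(z)-\ip{v}{\chi_z} = (v-p)(z) - \ip{v-p}{\chi_z}, \qquad\text{so}\qquad \big|v(z)-\ip{v}{\chi_z}\big| \lesssim \norm{v-p}{L^\infty(\Patch_z)}.
\end{align*}
Choosing $p$ as an averaged Taylor (Bramble--Hilbert) affine approximation of $v$ on the patch and combining the Sobolev embedding with a scaling argument yields $\norm{v-p}{L^\infty(\Patch_z)}\lesssim h^{2-n/2}\norm{D^2 v}{\Patch_z}$. Since $\norm{\eta_z}{T}\lesssim |T|^{1/2}\eqsim h^{n/2}$, each elementwise contribution is bounded by $h^2\norm{D^2 v}{\Patch_T}$; summing over $T\in\TT$ and using the finite overlap of the patches gives $\norm{(I_\TT-\JCL_\TT)v}{}\lesssim h^2\norm{D^2 v}{}$, which together with the first bound proves the claim.

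The main obstacle, and the only place the hypothesis $s_z=z$ is used, is the second term: without it the patch mean reproduces only constants, so the coefficient $v(z)-\ip{v}{\chi_z}$ would be of order $h\,|\nabla v|$ rather than $h^2\,|D^2 v|$, losing one full order. The delicate part is bookkeeping the two scaling exponents so that they conspire to give exactly $h^2$: the Sobolev $L^\infty$ bound on the patch carries the factor $h^{2-n/2}$ while $\norm{\eta_z}{T}$ carries $h^{n/2}$, and their product is independent of $n$. This balance also explains why the dimensional restriction $n\le 3$ is essential, both for the point values defining $I_\TT v$ and for the embedding $H^2\hookrightarrow L^\infty$ on each patch.
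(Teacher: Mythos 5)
Your argument is correct, but it follows a genuinely different route from the paper's. The paper (following the template used for the weighted operator $\JCLw_\TT$ in Theorem~\ref{thm:wClem}) never introduces point values of $v$: it shows that under $s_z=z$ the operator reproduces affine polynomials locally, via the same identity $\ip{q}{\chi_z}=q(s_z)=q(z)$ that you use, and then writes $\norm{(1-\JCL_\TT)v}{T}=\norm{(1-\JCL_\TT)(v-q)}{T}\lesssim\norm{v-q}{\Patch_T}$ using only the local $L^2$-boundedness of $\JCL_\TT$, finishing with a Bramble--Hilbert argument in $L^2$ on the patch. You instead insert the Lagrange interpolant $I_\TT$ and split into $(1-I_\TT)v$ and $(I_\TT-\JCL_\TT)v$, which forces you through the embedding $H^2(\Omega)\hookrightarrow C^0(\overline\Omega)$ and an $L^\infty$ Bramble--Hilbert bound on each patch, with the two scaling exponents $h^{2-n/2}$ and $h^{n/2}$ recombining to $h^2$. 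Both proofs hinge on exactly the same use of the hypothesis $s_z=z$; what the paper's version buys is that it stays entirely in $L^2$, so it needs no dimensional restriction and no pointwise evaluation of $v$, whereas your version makes the role of the symmetry condition very transparent (it is isolated in the single coefficient difference $v(z)-\ip{v}{\chi_z}$) at the price of requiring $n\le 3$ --- a restriction the paper imposes anyway, so nothing is lost here. Your scaling bookkeeping and the finite-overlap summation are correct; just note that the uniformity of the $L^\infty$ Bramble--Hilbert constant over all vertex patches should be justified by shape regularity (the patches are star-shaped with respect to balls of comparable radius), which is standard.
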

The latter result is based on the observation that if $z=s_z$, then $\JCL_\TT q(z) = q(z)$ for $q$ a polynomial of degree less or equal to one. 
This property is lost when $z\neq s_z$, see Section~\ref{sec:ex:weightedClement} for a numerical example.
Particularly, we have that (following the proof of~\cite[Lemma~22]{MINRESsingular}):
\begin{align*}
  (\JCL_\TT q)(z) = \frac{1}{|\Patch_z|} \int_{\Patch_z} q \,\di x = \frac{1}{|\Patch_z|} \sum_{T\in\patch_z} \int_T q\,\di x 
  = \frac{1}{|\Patch_z|} \sum_{T\in\patch_z} |T| \, q(s_T) = q(s_z).
\end{align*}

For the construction of the weighted Cl\'ement quasi-interpolator consider for each $z\in\VV_0$ a convex combination
\begin{align}\label{eq:convexcomb}
  \sum_{T\in\patch_z} \alpha_{z,T}  s_T = z, \quad\sum_{T\in\patch_z} \alpha_{z,T} = 1, \quad \alpha_{z,T}\geq 0 \quad(T\in\patch_z).
\end{align}
We stress that such a convex combination always exists, because $z$ lies in the convex hull of the center of masses $\{s_T\,:\,T\in\patch_z\}$, but for $n\geq 2$ it is not necessarily unique.
Indeed, for each $z\in\VV_0$ there are at least $n+1$ elements in $\patch_z$, but the node $z$ can be written as a convex combination of at most $n+1$ center of masses. 
We give two examples, one for $n=1$ and the other for $n=2$.
\begin{example}\label{ex:CL1d}
  Let $\Omega = (a,b)$ and let $\TT$ denote a partition of $\Omega$ into open intervals. 
  For an interior node $z\in\VV_0$ let $T_z^- = (z_-,z)$ and $T_z^+=(z,z_+)$ denote the two elements of the patch $\patch_z$. 
  A straightforward computation shows that $\alpha_{z,T_z^-},\alpha_{z,T_z^+}$ satisfying~\eqref{eq:convexcomb} are unique and given by
  \begin{align*}
    \alpha_{z,T_z^-} = \frac{z_+-z}{z_+-z_-}, \quad \alpha_{z,T_z^+} = \frac{z-z_-}{z_+-z_-}.
  \end{align*}
\end{example}
\begin{example}\label{ex:CL2d}
  Consider $n=2$ and the nodes $z_1 = (0,0)$, $z_2 = (1,0)$, $z_3 = (1,1)$, $z_4 = (0,1)$ and $z=(\tfrac12,\tfrac13)$. The elements $T_j = \conv\{z_j,z_{\!\!\mod(j,4)+1},z\}$, $j=1,2,3,4$, define a regular triangulation of the domain $\Omega = (0,1)^2$. The center of masses are given by
  \begin{align*}
    s_{T_1} = (\tfrac12,\tfrac19), \quad s_{T_2} = (\tfrac56,\tfrac49), \quad
    s_{T_3} = (\tfrac12,\tfrac79), \quad s_{T_4} = (\tfrac16,\tfrac49).
  \end{align*}
  It can be verified that the convex combination~\eqref{eq:convexcomb} is not unique, e.g., 
  \begin{align*}
    \frac23 s_{T_1} + \frac13 s_{T_3} = z = \frac13 s_{T_1} + \frac13 s_{T_2} + \frac13 s_{T_4}.
  \end{align*}
\end{example}
For each $z\in\VV_0$ let $(\alpha_{z,T})_{T\in\patch_z}$ denote fixed coefficients satisfying~\eqref{eq:convexcomb} and define
\begin{align*}
  \varphi_z|_T = \begin{cases}
    \frac{\alpha_{z,T}}{|T|} & \text{if } T\in\patch_z, \\
    0 & \text{else}.
  \end{cases}
\end{align*}
Thus, $\varphi_z\in \PP^0(\TT)$ and $\norm{\varphi_z}{L^\infty(\Omega)}\eqsim |\Patch_z|^{-1}$.  
To see the latter equivalence note that $\alpha_{z,T}\leq 1$ for all $T\in\patch_z$, $z\in\VV_0$. 
Therefore, $\norm{\varphi_z}{L^\infty(\Omega)}\lesssim |\Patch_z|^{-1}$. For the other bound, note that there exists at least one $T^\star\in\patch_z$ with $\alpha_{z,T^\star}\geq (\#\patch_z)^{-1}$. Suppose this is not true, then $\sum_{T\in\patch_z}\alpha_{z,T}<\#\patch_z (\#\patch_z)^{-1}=1$ which contradicts~\eqref{eq:convexcomb}. We conclude that $\norm{\varphi_z}{L^\infty(\Omega)}\geq \alpha_{z,T^\star}|T^\star|^{-1}\gtrsim |\Patch_z|^{-1}$.

Let $\weight = \set{\varphi_z}{z\in\VV_0}$ denote the collection of all weight functions. 
The weighted Cl\'ement quasi-interpolator is given by
\begin{align}
  \JCLw_\TT v = \sum_{z\in\VV_0} \ip{v}{\varphi_z} \eta_z \quad\text{for }v\in L^1(\Omega).
\end{align}
We collect its main properties in the next result.
\begin{theorem}\label{thm:wClem}
  The weighted Cl\'ement quasi-interpolator satisfies:
  \begin{itemize}
    \item $\JCLw_\TT v= \JCLw_\TT\Pi_\TT^0v$ for all $v\in L^2(\Omega)$, 
    \item $\norm{(1-\JCLw_\TT)v}T\lesssim h_T \norm{\nabla v}{\Patch_T}$ for all $T\in\TT$ and $v\in H_0^1(\Omega)$, 
    \item $\norm{(1-\JCLw_\TT)v}T\lesssim h_T^2 \norm{D^2 v}{\Patch_T}$ for all $T\in\TT$ and $v\in H^2(\Omega)\cap H_0^1(\Omega)$,
    \item $\norm{\JCLw_\TT v}{}\lesssim \norm{v}{}$ for all $v\in L^2(\Omega)$, resp. $\norm{\JCLw_\TT v}{1}\lesssim \norm{v}1$ for all $v\in H_0^1(\Omega)$. 
  \end{itemize}
\end{theorem}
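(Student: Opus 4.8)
The plan is to establish the four items essentially independently, the third (second-order approximation) being the only genuinely new point; the remaining three follow the classical Cl\'ement template once the relevant local properties of the weights $\varphi_z$ are recorded. The first identity is immediate: since each $\varphi_z\in\PP^0(\TT)$ and $\Pi_\TT^0$ is the $L^2(\Omega)$-orthogonal projection onto $\PP^0(\TT)$, we have $\ip{v}{\varphi_z}=\ip{\Pi_\TT^0 v}{\varphi_z}$ for every $z\in\VV_0$, whence $\JCLw_\TT v=\JCLw_\TT\Pi_\TT^0 v$. For the remaining items I would first collect the two local facts on which everything rests: (i) $\ip{1}{\varphi_z}=\sum_{T\in\patch_z}\alpha_{z,T}=1$, so that $\varphi_z$ has unit integral over $\Patch_z$, together with the already established bound $\norm{\varphi_z}{L^\infty(\Omega)}\eqsim|\Patch_z|^{-1}$ and $\supp\varphi_z\subseteq\overline{\Patch_z}$; and (ii) the affine reproduction obtained from the moment condition~\eqref{eq:convexcomb}: for affine $q$,
\[
\ip{q}{\varphi_z}=\sum_{T\in\patch_z}\frac{\alpha_{z,T}}{|T|}\int_T q\,\di x=\sum_{T\in\patch_z}\alpha_{z,T}\,q(s_T)=q\Big(\sum_{T\in\patch_z}\alpha_{z,T}\,s_T\Big)=q(z),
\]
so that $\JCLw_\TT q=\sum_{z\in\VV_0}q(z)\eta_z$. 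In particular $\JCLw_\TT$ reproduces affine functions on every element all of whose vertices are interior, because there $\sum_{z\in\VV_T}\eta_z\equiv1$ and $q=\sum_{z\in\VV_T}q(z)\eta_z$.

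Properties two and four then follow the standard argument. Cauchy--Schwarz with (i) and shape-regularity give the local stability $\norm{\JCLw_\TT v}T\lesssim\sum_{z\in\VV_T}|\ip{v}{\varphi_z}|\,\norm{\eta_z}T\lesssim\norm{v}{\Patch_T}$, and summing over $T\in\TT$ with the finite overlap of the patches yields $L^2$-stability. For the first-order estimate I would split $(1-\JCLw_\TT)v=(1-\JCLw_\TT)(v-c)+(1-\JCLw_\TT)c$: on an element $T$ with all vertices interior the constant is reproduced, the second term vanishes, and a Poincar\'e inequality on $\Patch_T$ bounds the first term by $h_T\norm{\nabla v}{\Patch_T}$; on an element touching $\Gamma$ I instead take $c=0$ and use the Friedrichs inequality $\norm{v}{\Patch_T}\lesssim h_T\norm{\nabla v}{\Patch_T}$ (valid since $v\in H_0^1(\Omega)$ vanishes on the boundary portion of $\Patch_T$) together with the local stability. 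The $H^1$-stability is obtained by the same two cases, combining an inverse estimate on the discrete image with local stability and, on interior elements, the $c$-trick, on boundary elements again Friedrichs.

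The core of the theorem is the second-order estimate, where I would use the split $(1-\JCLw_\TT)v=(1-\JCLw_\TT)(v-q)+(q-\JCLw_\TT q)$ with $q$ the first-order averaged Taylor polynomial of $v$ on $\Patch_T$. By Bramble--Hilbert $\norm{v-q}{\Patch_T}\lesssim h_T^2\norm{D^2 v}{\Patch_T}$, so local stability controls the first term by $h_T^2\norm{D^2 v}{\Patch_T}$. For the second term the affine reproduction (ii) gives $q-\JCLw_\TT q=0$ on any element all of whose vertices are interior; this is precisely the gain over the unweighted operator $\JCL_\TT$. The remaining boundary elements are the main obstacle: there $\JCLw_\TT$ drops the boundary nodes, so
\[
(q-\JCLw_\TT q)\big|_T=\sum_{z\in\VV_T\setminus\VV_0}q(z)\,\eta_z.
\]
To control this I would exploit that $v\in H^2(\Omega)\cap H_0^1(\Omega)\hookrightarrow C^0(\overline\Omega)$ (as $n\le3$) vanishes at the boundary nodes, so $|q(z)|=|q(z)-v(z)|\le\norm{v-q}{L^\infty(\Patch_T)}$, and a scaled Sobolev/Bramble--Hilbert estimate gives $\norm{v-q}{L^\infty(\Patch_T)}\lesssim h_T^{2-n/2}\norm{D^2 v}{\Patch_T}$; since $\norm{\eta_z}T\eqsim h_T^{n/2}$ this yields the bound $h_T^2\norm{D^2 v}{\Patch_T}$ for the boundary term as well. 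Summing the local estimates over $T\in\TT$ then completes the proof.

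I expect this boundary-element case to be the only delicate point: it is exactly where the homogeneous boundary condition $v|_\Gamma=0$ must be converted into a second-order-consistent bound for the dropped boundary nodal contributions, and where the pointwise ($L^\infty$) rather than $L^2$ approximation of $v-q$ is required. Everything else reduces to the well-understood Cl\'ement machinery applied with the two local properties of $\varphi_z$.
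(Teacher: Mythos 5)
Your proof is correct and rests on the same two pillars as the paper's: local stability from $\norm{\varphi_z}{L^\infty(\Omega)}\lesssim|\Patch_z|^{-1}$ together with $\supp\varphi_z\subseteq\overline{\Patch_z}$, and the reproduction of affine functions at interior nodes, $\ip{q}{\varphi_z}=q(z)$, coming from the convex-combination condition~\eqref{eq:convexcomb}. The only place where you take a genuinely different route is the treatment of elements touching $\Gamma$. The paper works with the \emph{constrained} affine space $\set{q\in\PP^1(\Patch_T)}{q(z)=0 \text{ for } z\in\VV_T\setminus\VV_0}$, for which $\JCLw_\TT q|_T=q|_T$ holds on every element, and then disposes of all three estimates (first order, second order, $H^1$-stability) with one appeal to a Bramble--Hilbert argument whose validity for that constrained quotient uses $v|_\Gamma=0$. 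You instead keep the unconstrained averaged Taylor polynomial and explicitly bound the dropped boundary contributions $\sum_{z\in\VV_T\setminus\VV_0}q(z)\eta_z$ via $v(z)=0$, the scaled estimate $\norm{v-q}{L^\infty(\Patch_T)}\lesssim h_T^{2-n/2}\norm{D^2v}{\Patch_T}$, and $\norm{\eta_z}{T}\eqsim h_T^{n/2}$. Both routes are valid for $n\le 3$; yours makes fully explicit the step the paper compresses into ``a Bramble--Hilbert argument'' and pinpoints exactly where the homogeneous boundary condition enters, at the mild cost of invoking the embedding $H^2(\Omega)\cap H_0^1(\Omega)\hookrightarrow C^0(\overline\Omega)$ (and, for the first-order and $H^1$-stability cases, a scaled Friedrichs inequality on boundary patches), whereas the paper's constrained-space formulation keeps a single uniform statement for all cases.
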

\begin{proof}
  To see the first assertion note that $\varphi_z\in\PP^0(\TT)$, thus, 
  \begin{align*}
    \JCLw_\TT v = \sum_{z\in\VV_0} \ip{v}{\varphi_z}\eta_z
    = \sum_{z\in\VV_0} \ip{v}{\Pi_\TT^0\varphi_z}\eta_z = \sum_{z\in\VV_0} \ip{\Pi_\TT^0 v}{\varphi_z}\eta_z
    = \JCLw_\TT \Pi_\TT^0 v.
  \end{align*}
  Boundedness in $L^2(\Omega)$ follows from local boundedness. Let $T\in\TT$ be given. 
    We get with the usual scaling arguments together with $\norm{\varphi_z}{L^\infty(\Omega)}\lesssim |\Patch_z|^{-1}$ that
  \begin{align*}
    \norm{\JCLw_\TT v}T \leq \sum_{z\in\VV_T\cap\VV_0} |\ip{v}{\varphi_z}|\, \norm{\eta_z}{T} \lesssim \sum_{z\in\VV_T\cap\VV_0} \norm{v}{\Patch_z}|\Patch_z|^{-1}\norm{1}{\Patch_z}|T|^{1/2} \lesssim \norm{v}{\Patch_T}.
  \end{align*}
  The first-order and second-order approximation properties can be seen as follows.
  Let $q$ be a polynomial of degree less than or equal to one on $\Patch_z$. Then, for $z\in\VV_0$ we have
  \begin{align*}
    \JCLw_\TT q(z) = \int_{\Patch_z} \varphi_z q\,\di x 
    &=\sum_{T\in\patch_z} \frac{\alpha_{z,T}}{|T|}\int_T q\,\di x = \sum_{T\in\patch_z} \frac{\alpha_{z,T}}{|T|} |T|q(s_T)
    \\ 
    &= \sum_{T\in\patch_z} \alpha_{z,T} \, q(s_T) = q\Big(\sum_{T\in\patch_z}\alpha_{z,T}s_T\Big) = q(z).
  \end{align*}
  Here, we used~\eqref{eq:convexcomb}.
  Let $T\in\TT$ be given and let $q$ denote a polynomial of degree less than or equal to one on $\Patch_T$ with $q(z)=0$ for $z\in \VV_T\setminus\VV_0$. With the aforegoing observations we see that $(\JCLw_\TT q)(z) = q(z)$ for $z\in\VV_T$ and, consequently, $\JCLw_\TT q|_T = q|_T$. 
    This and the local boundedness yield
  \begin{align*}
    \norm{(1-\JCLw_\TT)v}T &= \norm{(1-\JCLw_\TT)(v-q)}T \lesssim \norm{v-q}{\Patch_T}.
  \end{align*}
  The asserted approximation results then follow by a Bramble--Hilbert argument.
  
  It remains to prove boundedness in $H_0^1(\Omega)$. Let $T\in\TT$ be given and let $q$ denote a constant on $\Patch_T$ with $q(z) = 0$ for $z\in\VV_T\setminus\VV_0$. Arguing as above we have $\JCLw_\TT q|_T = q|_T$. 
    The inverse estimate and local boundedness then show
  \begin{align*}
    \norm{\nabla\JCLw_\TT v}T &= \norm{\nabla\JCLw_\TT(v-q)}T \lesssim h_T^{-1}\norm{\JCLw_\TT(v-q)}T\lesssim h_T^{-1}\norm{v-q}{\Patch_T}.
\end{align*}
Again, with a Bramble--Hilbert argument we conclude $h_T^{-1}\norm{v-q}{\Patch_T}\lesssim \norm{\nabla v}{\Patch_T}$, which finishes the proof.
\end{proof}

Following Section~\ref{sec:projections} we define a bounded projector $\QCLw_\TT\colon H^{-1}(\Omega)\to \PP^0(\TT)$ based on the weighted Cl\'ement operator as
\begin{align*}
  \QCLw_\TT := (\JCLw_\TT)' + (1-\JCLw_\TT)'\Bop_\TT'.
\end{align*}

We summarize its properties in the next result. The proof follows similar as in~\cite[Theorem~8]{MultilevelNorms21} and we only give some details:
\begin{theorem}\label{thm:Qprojmod}
  The operator $\QCLw_\TT$ satisfies
  \begin{itemize}
    \item $\QCLw_\TT\phi = \phi$ for all $\phi\in \PP^0(\TT)$, 
    \item $\norm{(1-\QCLw_\TT)\phi}{-1} \lesssim \norm{h_\TT(1-\Pi_\TT^0)\phi}{}$ for all $\phi\in L^2(\Omega)$, 
    \item $\norm{\QCLw_\TT\phi}{-1}\lesssim \norm{\phi}{-1}$ for all $\phi \in H^{-1}(\Omega)$, and
    \item $\norm{\QCLw_\TT\phi}{}\lesssim \norm{\phi}{}$ for all $\phi \in L^2(\Omega)$.
  \end{itemize}
\end{theorem}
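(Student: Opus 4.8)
The plan is to reduce all four assertions to properties of the \emph{primal} operator
\[
  S_\TT := \JCLw_\TT + \Bop_\TT(1-\JCLw_\TT)\colon H_0^1(\Omega)\to H_0^1(\Omega).
\]
A direct computation with the defining identity $\QCLw_\TT = (\JCLw_\TT)' + (1-\JCLw_\TT)'\Bop_\TT'$ shows that $\QCLw_\TT = (S_\TT)'$, so that $\ip{\QCLw_\TT\phi}{v}=\ip{\phi}{S_\TT v}$ for all $\phi\in H^{-1}(\Omega)$ and $v\in H_0^1(\Omega)$. The reproduction property (first bullet) is then purely algebraic. I would write $1-S_\TT=(1-\Bop_\TT)(1-\JCLw_\TT)$ and observe that $\Bop_\TT$ preserves elementwise integrals, i.e.\ $\ip{\Bop_\TT w}{\chi_T}=\ip{w}{\chi_T}$ for every $T\in\TT$, which is immediate from $\supp\eta_{\mathrm{b},T}\subseteq \overline T$ and $\ip{\eta_{\mathrm{b},T}}{1}=1$. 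Hence $\ip{\psi}{(1-S_\TT)v}=0$ for all $\psi\in\PP^0(\TT)$ and $v\in H_0^1(\Omega)$, and for $\phi\in\PP^0(\TT)$ duality gives $\ip{\QCLw_\TT\phi}{v}=\ip{\phi}{S_\TT v}=\ip{\phi}{v}$, i.e.\ $\QCLw_\TT\phi=\phi$.

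For the improved approximation estimate (second bullet) I would exploit exactly this orthogonality. Taking the supremum over $v\in H_0^1(\Omega)$ in the dual norm and using $\ip{(1-\QCLw_\TT)\phi}{v}=\ip{\phi}{(1-S_\TT)v}$, the fact that $(1-S_\TT)v$ is $L^2(\Omega)$-orthogonal to $\PP^0(\TT)$ lets me insert $1-\Pi_\TT^0$:
\begin{align*}
  \ip{(1-\QCLw_\TT)\phi}{v} = \ip{(1-\Pi_\TT^0)\phi}{(1-\Bop_\TT)(1-\JCLw_\TT)v}.
\end{align*}
I would then estimate elementwise: local $L^2(\Omega)$-boundedness of $1-\Bop_\TT$ (again because $\Bop_\TT w|_T$ depends only on $w|_T$) combined with the first-order estimate $\norm{(1-\JCLw_\TT)v}{T}\lesssim h_T\norm{\nabla v}{\Patch_T}$ from Theorem~\ref{thm:wClem} gives $\norm{(1-\Bop_\TT)(1-\JCLw_\TT)v}{T}\lesssim h_T\norm{\nabla v}{\Patch_T}$. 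A Cauchy--Schwarz over the elements together with the finite overlap of the patches $\{\Patch_T\}$ then produces the bound $\norm{h_\TT(1-\Pi_\TT^0)\phi}{}\,\norm{\nabla v}{}$, and dividing by $\norm{v}{1}$ finishes this item.

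The $H^{-1}(\Omega)$-stability (third bullet) I would again obtain by duality, writing $\norm{\QCLw_\TT\phi}{-1}=\sup_{v}\ip{\phi}{S_\TT v}/\norm{v}{1}$, which is bounded by $\norm{\phi}{-1}$ times the operator norm of $S_\TT$ on $H_0^1(\Omega)$; it thus remains to show that $S_\TT$ is bounded on $H_0^1(\Omega)$. The term $\JCLw_\TT v$ is handled by the $H_0^1(\Omega)$-stability in Theorem~\ref{thm:wClem}. For the bubble correction $\Bop_\TT(1-\JCLw_\TT)v$ I would use the inverse estimate $\norm{\nabla\Bop_\TT w}{T}\lesssim h_T^{-1}\norm{w}{T}$ (which follows from standard scaling of the fixed-shape element bubble, using $\norm{\eta_{\mathrm{b},T}}{L^\infty(T)}\eqsim|T|^{-1}$) applied to $w=(1-\JCLw_\TT)v$, so that the factor $h_T^{-1}$ cancels the $h_T$ from the first-order estimate and summation over elements yields $\norm{\nabla\Bop_\TT(1-\JCLw_\TT)v}{}\lesssim\norm{\nabla v}{}$. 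The $L^2(\Omega)$-stability (fourth bullet) is then routine: $(\JCLw_\TT)'$, $\Bop_\TT'$ and $(1-\JCLw_\TT)'$ are the $L^2(\Omega)$-adjoints of the $L^2(\Omega)$-bounded operators $\JCLw_\TT$, $\Bop_\TT$ and $1-\JCLw_\TT$, hence $L^2(\Omega)$-bounded with the same constants, and chaining the two summands of $\QCLw_\TT$ gives the claim.

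I expect the main obstacle to be the third bullet, specifically the $H_0^1(\Omega)$-boundedness of the bubble correction $\Bop_\TT(1-\JCLw_\TT)$: this is where the scaling of the element bubbles and the precise interplay between the inverse estimate (which loses one power of $h_T$) and the first-order approximation property of the weighted Cl\'ement operator (which gains one power of $h_T$) must be balanced. All remaining steps reduce to the duality $\QCLw_\TT=(S_\TT)'$ together with the stability and approximation properties of $\JCLw_\TT$ collected in Theorem~\ref{thm:wClem} and the elementary mass-preserving nature of $\Bop_\TT$.
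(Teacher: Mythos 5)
Your proposal is correct and follows essentially the same route as the paper: both reduce all four claims to the primal operator $\JCLw_\TT + \Bop_\TT(1-\JCLw_\TT)$ via duality, use the mass-preserving property of $\Bop_\TT$ for the projection identity, and balance the inverse estimate for the bubble against the first-order approximation property of $\JCLw_\TT$ from Theorem~\ref{thm:wClem} to get $H_0^1(\Omega)$-stability. The only cosmetic difference is in the second bullet, where you estimate $\ip{(1-\Pi_\TT^0)\phi}{(1-\Bop_\TT)(1-\JCLw_\TT)v}$ elementwise while the paper simply chains the projection property with the $H^{-1}(\Omega)$-boundedness and the standard bound $\norm{(1-\Pi_\TT^0)\phi}{-1}\lesssim\norm{h_\TT(1-\Pi_\TT^0)\phi}{}$; both are valid.
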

\begin{proof}
  Using $J_\TT = (\QCLw_\TT)' = \JCLw_\TT + \Bop_\TT(1-\JCLw_\TT)$, boundedness of $B_\TT$, and 
  Theorem~\ref{thm:wClem} we obtain 
  \begin{align*}
    \norm{J_\TT v}{} \lesssim \norm{v}{} &\leq \norm{\JCLw_\TT v}{} + \norm{B_\TT(1-\JCLw_\TT )v}{}
    \lesssim \norm{v}{} + \norm{(1-\JCLw_\TT)v}{} \lesssim \norm{v}{}. 
  \end{align*}
  The same arguments together with the inverse estimate and the approximation property of $\JCLw_\TT$ prove that
  \begin{align*}
    \norm{\nabla J_\TT v}{} \lesssim \norm{\nabla v}{} + \norm{h_\TT^{-1}B_\TT(1-\JCLw_\TT)v}{} \lesssim \norm{\nabla v}{}.
  \end{align*}
  Therefore, $\QCLw_\TT = J_\TT'$ is bounded in $L^2(\Omega)$ resp. $H^{-1}(\Omega)$. 

  The projection property can be seen by noting that
  \begin{align*}
    \ip{\QCLw_\TT\phi}{v}_T = \ip{\phi}{\JCLw_\TT + \Bop_\TT(1-\JCLw_\TT)v}_T = 0
  \end{align*}
  for all $\phi\in \PP^0(\TT)$, $v\in L^2(\Omega)$. 

  Finally, the projection property and boundedness of $\QCLw_\TT$ yield
  \begin{align*}
  \norm{(1-\QCLw_\TT)\phi}{-1} = \norm{(1-\QCLw_\TT)(1-\Pi_\TT^0)\phi}{-1} \lesssim \norm{(1-\Pi_\TT^0)\phi}{-1} \lesssim \norm{h_\TT(1-\Pi_\TT^0)\phi}{} 
  \end{align*}
  for $\phi\in L^2(\Omega)$, which concludes the proof.
\end{proof}

The next result provides insight into the best-approximation of constants in the dual norm of $H^2(\Omega)\cap H_0^1(\Omega)$.
\begin{corollary}\label{cor:Hm2approx}
  Let $X= H^2(\Omega)\cap H_0^1(\Omega)$. 
  If $f\in L^2(\Omega)$, then
  \begin{align*}
    \min_{f_\TT\in \PP^0(\TT)} \norm{f-f_\TT}{X'} \leq \norm{(1-\QCLw_\TT)f}{X'} \lesssim \norm{h_\TT^2(1-\Pi_\TT^0)f}{}.
  \end{align*}
\end{corollary}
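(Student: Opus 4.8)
The left inequality is free: since $\QCLw_\TT f\in\PP^0(\TT)$, choosing $f_\TT=\QCLw_\TT f$ in the minimum already gives $\min_{f_\TT\in\PP^0(\TT)}\norm{f-f_\TT}{X'}\le\norm{(1-\QCLw_\TT)f}{X'}$. Hence the whole content is the right inequality, and the plan is to rerun the argument from the proof of Theorem~\ref{thm:Qprojmod}, but one order higher: where that proof used the first-order approximation of $\JCLw_\TT$ to extract the factor $h_\TT$, here I would use the second-order approximation property from Theorem~\ref{thm:wClem} to extract $h_\TT^2$, testing against $v\in X=H^2(\Omega)\cap H_0^1(\Omega)$ rather than against $v\in H_0^1(\Omega)$.

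Concretely, first I would use the projection property exactly as in Theorem~\ref{thm:Qprojmod}: since $\Pi_\TT^0 f\in\PP^0(\TT)$ and $\QCLw_\TT$ reproduces piecewise constants, $(1-\QCLw_\TT)f=(1-\QCLw_\TT)(1-\Pi_\TT^0)f$. Then, writing $J_\TT=(\QCLw_\TT)'=\JCLw_\TT+\Bop_\TT(1-\JCLw_\TT)$ and testing against an arbitrary $v\in X$, the adjoint relation together with the algebraic identity $1-J_\TT=(1-\Bop_\TT)(1-\JCLw_\TT)$ yields
\[
  \ip{(1-\QCLw_\TT)f}{v}=\ip{(1-\Pi_\TT^0)f}{(1-J_\TT)v}=\ip{(1-\Pi_\TT^0)f}{(1-\Bop_\TT)(1-\JCLw_\TT)v}.
\]
An elementwise Cauchy--Schwarz splitting off $h_\TT^2$ then gives
\[
  |\ip{(1-\QCLw_\TT)f}{v}|\le\norm{h_\TT^2(1-\Pi_\TT^0)f}{}\;\norm{h_\TT^{-2}(1-\Bop_\TT)(1-\JCLw_\TT)v}{},
\]
so that, after dividing by $\norm{v}{X}$ and passing to the supremum, it only remains to establish the scaled bound
\[
  \norm{h_\TT^{-2}(1-\Bop_\TT)(1-\JCLw_\TT)v}{}\lesssim\norm{D^2 v}{}\le\norm{v}{X}.
\]

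I expect this last bound to be the only real work, and it is where the weighted Cl\'ement operator earns its keep. Setting $w=(1-\JCLw_\TT)v$, the local $L^2$-boundedness of $\Bop_\TT$ (note that $\Bop_\TT w|_T$ depends only on $w|_T$, so $\norm{(1-\Bop_\TT)w}{T}\lesssim\norm{w}{T}$) reduces matters to controlling $h_T^{-2}\norm{w}{T}$; the second-order approximation property $\norm{(1-\JCLw_\TT)v}{T}\lesssim h_T^2\norm{D^2 v}{\Patch_T}$ of Theorem~\ref{thm:wClem} then cancels the $h_T^{-2}$ elementwise, leaving $\norm{h_T^{-2}(1-\Bop_\TT)w}{T}\lesssim\norm{D^2 v}{\Patch_T}$. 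Summing over $T\in\TT$ and invoking the finite overlap of the patches $\Patch_T$ (a consequence of shape-regularity) gives $\sum_{T\in\TT}\norm{D^2 v}{\Patch_T}^2\lesssim\norm{D^2 v}{}^2$, which closes the argument. The only points demanding care are the adjoint bookkeeping and the identity $1-J_\TT=(1-\Bop_\TT)(1-\JCLw_\TT)$; note that the regularity $v\in H^2(\Omega)\cap H_0^1(\Omega)$ is exactly what is needed to invoke the quadratic estimate, so the choice of $X$ is precisely matched to the second-order approximation property.
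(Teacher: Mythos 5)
Your proposal is correct and follows essentially the same route as the paper's proof: the projection property reduces $f$ to $(1-\Pi_\TT^0)f$, the adjoint identity $1-(\QCLw_\TT)'=(1-\Bop_\TT)(1-\JCLw_\TT)$ is applied, and the elementwise Cauchy--Schwarz argument with local boundedness of $\Bop_\TT$, the second-order approximation property of $\JCLw_\TT$ from Theorem~\ref{thm:wClem}, and finite patch overlap yields the bound. The only cosmetic difference is that you fix $f_\TT=\Pi_\TT^0 f$ at the outset, whereas the paper keeps $f_\TT\in\PP^0(\TT)$ arbitrary and specializes at the end.
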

\begin{proof}
  Let $v\in H^2(\Omega)\cap H_0^1(\Omega)$. Then, 
  \begin{align*}
    |\ip{(1-\QCLw_\TT)f}{v}| &= |\ip{(1-\QCLw_\TT)(f-f_\TT)}v| 
    \\
    &= |\ip{f-f_\TT}{(1-\JCLw_\TT)v - \Bop_\TT(1-\JCLw_\TT)v}|
    \\
    &\leq \sum_{T\in\TT} \norm{f-f_\TT}T \norm{(1-\Bop_\TT)(1-\JCLw_\TT)v}T 
    \\
    &\lesssim \sum_{T\in\TT} \norm{f-f_\TT}T \norm{(1-\JCLw_\TT)v}T
  \end{align*}
  for all $f_\TT\in \PP^0(\TT)$. Here, we also used boundedness of $\Bop_\TT|_T\colon L^2(T)\to L^2(T)$. 
  Applying Theorem~\ref{thm:wClem} shows $\norm{(1-\JCLw_\TT)v}{T}\lesssim h_T^2\norm{D^2 v}{\Patch_T}$.
  We conclude that
  \begin{align*}
    \norm{(1-\QCLw_\TT)f}{X'} &= \sup_{v\in X\setminus\{0\}} \frac{\ip{(1-\QCLw_\TT)f}{v}}{\norm{v}{\Omega,2}} \lesssim \norm{h_\TT^2(1-\Pi_\TT^0)f}{}.
  \end{align*}
  This finishes the proof.
\end{proof}

\section{$L^2$ estimates and postprocessed solution}\label{sec:L2}
In this section we revisit a well-known postprocessing scheme for mixed methods, see~\cite{Stenberg91}.
We show that using the operator $Q_\TT^\star = \QCLw_\TT$ in~\eqref{eq:mixed:mod} yields an improved result on the convergence of a postprocessed solution in the lowest-order case. 
It is known that the accuracy of postprocessed solutions $u_\TT^\star$ hinges on a closeness result of the approximate solution, e.g.,~\cite[Remark~2.1]{Stenberg91} notes that 
\begin{align*}
  \norm{\Pi_\TT^0u-u_\TT}{} \lesssim h^2 \norm{u}3,
\end{align*}
where $u_\TT\in\PP^0(\TT)$ is the solution of~\eqref{eq:mixed:standard}, to ensure that $\norm{u-u_\TT^\star}{} = \OO(h^2)$. 
The problem with this estimate is that it requires $u\in H^3(\Omega)$ or at least $u|_T\in H^3(T)$, $T\in\TT$, which for $f\in L^2(T)\setminus H^1(T)$, $T\in\TT$, is not realistic because even for the simplest model problem as considered in this work we can not expect more than $H^2(\Omega)$ regularity on convex domains. 

For the analysis in this section we will use the solution $\widetilde u\in H_0^1(\Omega)$ of the auxiliary problem
\begin{align}\label{eq:aux}
  -\Delta \widetilde u = \QCLw_\TT f, \quad\text{and set } \widetilde\ssigma = \nabla\widetilde u.
\end{align}

\subsection{Improved convergence of postprocessed solution}
We start by analyzing a supercloseness property.
\begin{lemma}\label{lem:superclose}
  Let $f\in H^{-1+s}(\Omega)$ for some $s\in[0,1]$ and let $(u_\TT,\ssigma_\TT)\in W_\TT\times \Sigma_\TT$ denote the solution of~\eqref{eq:mixed:mod} with $Q_\TT^\star = \QCLw_\TT$.
  The estimate
  \begin{align*}
  \norm{\Pi_\TT^0 \widetilde u-u_\TT}{} \lesssim h^{s_\Omega}\norm{\widetilde\ssigma-\ssigma_\TT}{} \lesssim h^{s_\Omega+\min\{s,s_\Omega\}}\norm{f}{-1+\min\{s,s_\Omega\}}
  \end{align*}
  holds, where $\widetilde u$ denotes the solution of~\eqref{eq:aux} and $\widetilde\ssigma = \nabla\widetilde u$.
\end{lemma}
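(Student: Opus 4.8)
The plan is to exploit that $(\widetilde u,\widetilde\ssigma)$ with $\widetilde\ssigma=\nabla\widetilde u$ is the \emph{exact} solution of the first-order system with the $L^2(\Omega)$ datum $\QCLw_\TT f$, while $(u_\TT,\ssigma_\TT)$ is precisely its lowest-order mixed FEM approximation. The first estimate is then a supercloseness (duality) argument and the second a standard a priori bound combined with elliptic regularity and the mapping properties of $\QCLw_\TT$. First I would record two consequences of the discrete equations: since $\div\widetilde\ssigma=-\QCLw_\TT f\in W_\TT$ and $\div\ssigma_\TT\in W_\TT$ solves $\ip{\div\ssigma_\TT}{v}=-\ip{\QCLw_\TT f}{v}$ for $v\in W_\TT$, one obtains the commuting property $\div(\widetilde\ssigma-\ssigma_\TT)=0$. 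Integration by parts (the boundary term vanishes as $\widetilde u\in H_0^1(\Omega)$) turns the first discrete equation into the error relation $\ip{\widetilde\ssigma-\ssigma_\TT}{\ttau}+\ip{\Pi_\TT^0\widetilde u-u_\TT}{\div\ttau}=0$ for all $\ttau\in\Sigma_\TT$, where I use $\div\ttau\in W_\TT$ to replace $\widetilde u$ by $\Pi_\TT^0\widetilde u$.

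For the supercloseness bound set $\phi:=\Pi_\TT^0\widetilde u-u_\TT\in W_\TT$ and let $z\in H_0^1(\Omega)$ solve $-\Delta z=\phi$, so that $\rr:=\nabla z$ satisfies $\div\rr=-\phi\in W_\TT$ and, by elliptic regularity~\eqref{eq:ellipticreg}, $\norm{z}{1+s_\Omega}\lesssim\norm{\phi}{-1+s_\Omega}\lesssim\norm{\phi}{}$. The commuting diagram property in~\eqref{eq:propRTproj} gives $\div\Pi_\TT^{\RT}\rr=\Pi_\TT^0\div\rr=-\phi$, hence $\norm{\phi}{}^2=-\ip{\phi}{\div\Pi_\TT^{\RT}\rr}$. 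Inserting $\ttau=\Pi_\TT^{\RT}\rr$ into the error relation, and then using $\div(\widetilde\ssigma-\ssigma_\TT)=0$ together with integration by parts to kill the term $\ip{\widetilde\ssigma-\ssigma_\TT}{\rr}$, leaves $\norm{\phi}{}^2=\ip{\widetilde\ssigma-\ssigma_\TT}{\Pi_\TT^{\RT}\rr-\rr}$. Here the divergence oscillation term in the approximation estimate of~\eqref{eq:propRTproj} drops out because $\div\rr$ is piecewise constant, so elementwise constant approximation of $\rr$ and $\norm{z}{1+s_\Omega}\lesssim\norm{\phi}{}$ give $\norm{\rr-\Pi_\TT^{\RT}\rr}{}\lesssim h^{s_\Omega}\norm{\phi}{}$. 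Cauchy--Schwarz then yields the first claimed inequality $\norm{\phi}{}\lesssim h^{s_\Omega}\norm{\widetilde\ssigma-\ssigma_\TT}{}$.

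For the second inequality I would bound $\norm{\widetilde\ssigma-\ssigma_\TT}{}$ by $\norm{\widetilde\ssigma-\Pi_\TT^{\RT}\widetilde\ssigma}{}$ via the quasi-optimality~\eqref{eq:L2quasioptSigma}, again discarding the oscillation term in~\eqref{eq:propRTproj} since $\div\widetilde\ssigma\in W_\TT$, so that elementwise constant approximation gives $\norm{\widetilde\ssigma-\ssigma_\TT}{}\lesssim\norm{(1-\Pi_\TT^0)\widetilde\ssigma}{}\lesssim h^{\min\{s,s_\Omega\}}\norm{\widetilde\ssigma}{\min\{s,s_\Omega\}}$. Elliptic regularity~\eqref{eq:ellipticreg} bounds $\norm{\widetilde\ssigma}{\min\{s,s_\Omega\}}\lesssim\norm{\widetilde u}{1+\min\{s,s_\Omega\}}\lesssim\norm{\QCLw_\TT f}{-1+\min\{s,s_\Omega\}}$, and interpolating the endpoint mapping properties of $\QCLw_\TT$ from Theorem~\ref{thm:Qprojmod} (bounded $H^{-1}(\Omega)\to H^{-1}(\Omega)$ and $L^2(\Omega)\to L^2(\Omega)$) bounds this by $\norm{f}{-1+\min\{s,s_\Omega\}}$. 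Multiplying by $h^{s_\Omega}$ closes the chain.

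The main obstacle I anticipate is the bookkeeping in the duality step: ensuring the two cancellations $\div(\widetilde\ssigma-\ssigma_\TT)=0$ and the vanishing oscillation term are invoked correctly, so that the gain in the dual estimate is exactly $h^{s_\Omega}$ and not less. The only other technical point is the interpolation of $\QCLw_\TT$ to fractional negative-order spaces $H^{-1+\min\{s,s_\Omega\}}(\Omega)$, which is supplied by the two endpoint bounds of Theorem~\ref{thm:Qprojmod} and the definition of the intermediate spaces by real interpolation.
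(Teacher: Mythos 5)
Your proof is correct. The supercloseness estimate is obtained by essentially the same duality argument as in the paper: a dual first-order system with datum $\Pi_\TT^0\widetilde u-u_\TT$, the choice $\ttau_\TT=\Pi_\TT^{\RT}\rr$ so that the commuting property and the piecewise-constant divergence kill both the $\div$-mismatch and the oscillation term, leaving $\norm{\Pi_\TT^0\widetilde u-u_\TT}{}^2=\ip{\widetilde\ssigma-\ssigma_\TT}{\Pi_\TT^{\RT}\rr-\rr}$ and the gain $h^{s_\Omega}$ from elliptic regularity. For the second inequality you deviate mildly from the paper: the paper inserts the exact solution, writing $\norm{\widetilde\ssigma-\ssigma_\TT}{}\leq\norm{\nabla(u-\widetilde u)}{}+\norm{\ssigma-\ssigma_\TT}{}$ and invoking Corollary~\ref{cor:quasiopt:mixed} together with $\norm{(1-\QCLw_\TT)f}{-1}\lesssim h^{\min\{s,s_\Omega\}}\norm{f}{-1+\min\{s,s_\Omega\}}$, whereas you stay entirely within the auxiliary problem, combining the quasi-best approximation~\eqref{eq:L2quasioptSigma} for the $L^2$ datum $\QCLw_\TT f$ with elliptic regularity and the interpolated bound $\norm{\QCLw_\TT f}{-1+\min\{s,s_\Omega\}}\lesssim\norm{f}{-1+\min\{s,s_\Omega\}}$ from the two endpoint estimates of Theorem~\ref{thm:Qprojmod}. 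Both routes are valid and of comparable length; yours avoids the detour through $u$ at the price of interpolating the operator itself rather than the complementary projection $1-\QCLw_\TT$, an argument the paper uses elsewhere in any case.
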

\begin{proof}
  The arguments used are essentially the same as in~\cite[Theorem~2.1]{Stenberg91} using the auxiliary solution $\widetilde u$ instead of $u$. 
  Let $(v,\ttau)\in H_0^1(\Omega)\times \Hdivset\Omega$ denote the unique solution of the first-order system,
  \begin{align*}
    \div\ttau &= \Pi_\TT^0 \widetilde u-u_\TT, \\
    \ttau-\nabla v &= 0, \\
    v|_\Gamma &= 0.
  \end{align*}
  Elliptic regularity~\eqref{eq:ellipticreg} shows
  \begin{align*}
    \norm{v}{1+s_\Omega} + \norm{\ttau}{s_\Omega} \lesssim \norm{\Pi_\TT^0 \widetilde u-u_\TT}{-1+s_\Omega} \lesssim \norm{\Pi_\TT^0 \widetilde u-u_\TT}{}.
  \end{align*}
  We have together with integration by parts that
  \begin{align*}
    \norm{\Pi_\TT^0 \widetilde u-u_\TT}{}^2 &= \ip{\widetilde u-u_\TT}{\Pi_\TT^0 \widetilde u-u_\TT} = \ip{\widetilde u-u_\TT}{\div\ttau}
    \\
    &= \ip{\widetilde u-u_\TT}{\div\ttau} + \ip{\widetilde\ssigma-\ssigma_\TT}{\ttau-\nabla v} 
    \\
    &= \ip{\widetilde u-u_\TT}{\div\ttau} + \ip{\widetilde\ssigma-\ssigma_\TT}{\ttau} + \ip{\div(\widetilde\ssigma-\ssigma_\TT)}{v} 
    \\
    &= \ip{\widetilde u-u_\TT}{\div(\ttau-\ttau_\TT)} + \ip{\widetilde\ssigma-\ssigma_\TT}{\ttau-\ttau_\TT} + \ip{\div(\widetilde\ssigma-\ssigma_\TT)}{v}
  \end{align*}
  for all $\ttau_\TT\in\RT^0(\TT)$. The last identity follows from Galerkin orthogonality. 
  Choosing $\ttau_\TT= \Pi_\TT^{\RT}\ttau$ we get that $\div(\ttau-\ttau_\TT) = (1-\Pi_\TT^0)\div\ttau = 0$ by~\eqref{eq:propRTproj}
  and noting that by~\eqref{eq:mixed:mod} the equality $\div\widetilde\ssigma = -\QCLw_\TT f= \div\ssigma_\TT$ holds, we further infer that
  \begin{align*}
    \norm{\Pi_\TT^0 \widetilde u-u_\TT}{}^2 &= \ip{\widetilde u-u_\TT}{\div(\ttau-\ttau_\TT)} + \ip{\widetilde\ssigma-\ssigma_\TT}{\ttau-\ttau_\TT} + \ip{\div(\widetilde\ssigma-\ssigma_\TT)}{v}
    \\
    &=  \ip{\widetilde\ssigma-\ssigma_\TT}{\ttau-\ttau_\TT} \leq \norm{\widetilde\ssigma-\ssigma_\TT}{}\norm{\ttau-\Pi_\TT^{\RT}\ttau}{} 
    \\
    &\lesssim \norm{\widetilde\ssigma-\ssigma_\TT}{} \left( \sum_{T\in\TT}\min_{\cchi\in \RT^0(T)} \norm{\ttau-\cchi}T^2 \right)^{1/2}
    \lesssim h^{s_\Omega} \norm{\widetilde\ssigma-\ssigma_\TT}{}  \norm{\Pi_\TT^0 \widetilde u-u_\TT}{}.
  \end{align*}
  The last estimate is a consequence of the fact that $\PP^0(T)^n\subseteq \RT^0(T)$, the approximation property of piecewise constants and elliptic regularity.

  It remains to prove $\norm{\widetilde\ssigma-\ssigma_\TT}{} \lesssim h^{\min\{s,s_\Omega\}}\norm{f}{-1+\min\{s,s_\Omega\}}$:
  The triangle inequality together with stability of the Poisson problem and quasi-optimality (Corollary~\ref{cor:quasiopt:mixed}) implies that
  \begin{align*}
    \norm{\widetilde\ssigma-\ssigma_\TT}{} &\leq \norm{\nabla u-\nabla\widetilde u}{} + \norm{\ssigma-\ssigma_\TT}{} 
    \\
    &\lesssim \norm{(1-\QCLw_\TT)f}{-1} + h^{\min\{s,s_\Omega\}} \norm{f}{-1+\min\{s,s_\Omega\}}.
  \end{align*}
  From Theorem~\ref{thm:Qprojmod} we conclude $\norm{(1-\QCLw_\TT)f}{-1} \lesssim h^{\min\{s,s_\Omega\}} \norm{f}{-1+\min\{s,s_\Omega\}}$ with an interpolation argument. This finishes the proof.
\end{proof}

We investigate the following postprocessing scheme, see, e.g.~\cite{Stenberg91} for mixed schemes or~\cite{SupConv2} for the discontinuous Petrov--Galerkin method with optimal test functions. 
Let $(u_\TT,\ssigma_\TT)\in W_\TT\times \Sigma_\TT$ denote the solution of~\eqref{eq:mixed:mod}. Define $u_\TT^\star\in \PP^1(\TT)$ on each $T\in\TT$ by
\begin{subequations}\label{eq:postproc}
  \begin{align}
    \ip{\nabla u_\TT^\star}{\nabla v}_T &= \ip{\ssigma_\TT}{\nabla v}_T \quad\forall v\in \PP^1(T), \\
    \Pi_\TT^0 u_\TT^\star|_T &= u_\TT|_T.
  \end{align}
\end{subequations}
Note that the postprocessing scheme from~\cite[Eq.(2.16)]{Stenberg91} is, in general, not well defined if $f\in H^{-1}(\Omega)\setminus L^2(\Omega)$.
Replacing the load $f$ with $\QCLw_\TT f$ in~\cite[Eq.(2.16)]{Stenberg91} and using that $\div\ssigma_\TT = -\QCLw_\TT f$ for the solution of~\eqref{eq:mixed:mod} we get~\eqref{eq:postproc} after integrating by parts.

\begin{theorem}\label{thm:postproc}
  Let $f\in H^{-1+s}(\Omega)$ for some $s\in[0,1]$ and let $(u_\TT,\ssigma_\TT)\in W_\TT\times \Sigma_\TT$ denote the solution of~\eqref{eq:mixed:mod} with $Q_\TT^\star = \QCLw_\TT$.
  We have that
  \begin{align*}
    \norm{u-u_\TT^\star}{} \lesssim h^{s_\Omega+\min\{s,s_\Omega\}} \norm{f}{-1+\min\{s,s_\Omega\}}.
  \end{align*}
  In particular, if $\Omega$ is convex and $f\in L^2(\Omega)$, then
  \begin{align*}
    \norm{u-u_\TT^\star}{} \lesssim h^2 \norm{f}{}.
  \end{align*}
\end{theorem}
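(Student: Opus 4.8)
The plan is to bound $\norm{u-u_\TT^\star}{}$ by decomposing it through the auxiliary solution $\widetilde u$ of~\eqref{eq:aux}, namely
\begin{align*}
  \norm{u-u_\TT^\star}{} \leq \norm{u-\widetilde u}{} + \norm{\widetilde u-u_\TT^\star}{},
\end{align*}
and to estimate each term separately. The first term is controlled by $\norm{u-\widetilde u}{1} = \norm{(1-\QCLw_\TT)f}{-1}$, which by Theorem~\ref{thm:Qprojmod} and an interpolation argument is $\lesssim h^{\min\{s,s_\Omega\}}\norm{f}{-1+\min\{s,s_\Omega\}}$; in fact one expects a sharper $L^2(\Omega)$ bound for $\norm{u-\widetilde u}{}$ by a duality (Aubin--Nitsche) argument, gaining an extra factor $h^{s_\Omega}$, so that $\norm{u-\widetilde u}{} \lesssim h^{s_\Omega+\min\{s,s_\Omega\}}\norm{f}{-1+\min\{s,s_\Omega\}}$, matching the target rate. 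This duality step is where I expect to lean on elliptic regularity~\eqref{eq:ellipticreg} together with the enhanced dual-norm approximation from Theorem~\ref{thm:Qprojmod} (or Corollary~\ref{cor:Hm2approx} in the convex/$L^2$ case).

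For the second term $\norm{\widetilde u-u_\TT^\star}{}$, I would follow the standard postprocessing estimate of~\cite{Stenberg91} but with $\widetilde u$ playing the role of the exact solution, since~\eqref{eq:postproc} is precisely the Stenberg scheme for the modified problem with exact data $\QCLw_\TT f$ and $\div\widetilde\ssigma = -\QCLw_\TT f = \div\ssigma_\TT$. The key observation is that $\Pi_\TT^0 u_\TT^\star = u_\TT$, so
\begin{align*}
  \norm{\widetilde u-u_\TT^\star}{} \leq \norm{(1-\Pi_\TT^0)\widetilde u}{} + \norm{\Pi_\TT^0\widetilde u - u_\TT}{} + \norm{(1-\Pi_\TT^0)(\widetilde u - u_\TT^\star)}{}.
\end{align*}
The middle term is exactly the supercloseness quantity bounded in Lemma~\ref{lem:superclose} by $h^{s_\Omega+\min\{s,s_\Omega\}}\norm{f}{-1+\min\{s,s_\Omega\}}$. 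The first term is $\lesssim h^2\norm{D^2\widetilde u}{}$ when $\widetilde u\in H^2$, and more generally $\lesssim h^{1+s_\Omega}\norm{\widetilde u}{1+s_\Omega}$, where $\norm{\widetilde u}{1+s_\Omega}\lesssim\norm{\QCLw_\TT f}{-1+s_\Omega}$ by elliptic regularity. The oscillation-type term $\norm{(1-\Pi_\TT^0)(\widetilde u-u_\TT^\star)}{}$ is handled by an elementwise Bramble--Hilbert / Poincaré bound using the definition~\eqref{eq:postproc} of $u_\TT^\star$, reducing it to $h_T$ times the elementwise $H^1$-error of $u_\TT^\star$, which in turn is controlled by $\norm{\widetilde\ssigma-\ssigma_\TT}{}$ through the first equation of~\eqref{eq:postproc} and $\widetilde\ssigma=\nabla\widetilde u$.

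The main obstacle will be getting the regularity bookkeeping right so that all three contributions land at the common rate $h^{s_\Omega+\min\{s,s_\Omega\}}$: the supercloseness term already carries this factor, but the consistency terms involving $\widetilde u$ must be shown to be no worse, which requires controlling $\norm{\QCLw_\TT f}{-1+s_\Omega}$ uniformly. I would use that $\QCLw_\TT$ is a bounded projection together with the fact that $\norm{\QCLw_\TT f}{-1+t}\lesssim\norm{f}{-1+t}$ for the relevant $t$, so that $\norm{\widetilde u}{1+s_\Omega}\lesssim\norm{f}{-1+\min\{s,s_\Omega\}}$ after invoking~\eqref{eq:ellipticreg} and interpolation. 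Finally, the convex $f\in L^2(\Omega)$ case follows by setting $s=1$ and $s_\Omega=1$, giving the quadratic rate $\norm{u-u_\TT^\star}{}\lesssim h^2\norm{f}{}$; here the sharp $h^2$ for the data-perturbation term $\norm{u-\widetilde u}{}$ is exactly the content of Corollary~\ref{cor:Hm2approx}.
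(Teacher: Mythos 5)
Your overall strategy coincides with the paper's: split through the auxiliary solution $\widetilde u$ of~\eqref{eq:aux}, treat $\norm{u-\widetilde u}{}$ by an Aubin--Nitsche duality argument that gains the extra factor $h^{s_\Omega}$, invoke Lemma~\ref{lem:superclose} for $\norm{\Pi_\TT^0\widetilde u-u_\TT}{}$, and control the nonconstant part of $\widetilde u-u_\TT^\star$ via the local Neumann problems behind~\eqref{eq:postproc}. However, there is one step in your decomposition that fails. You write
\begin{align*}
  \norm{\widetilde u-u_\TT^\star}{} \leq \norm{(1-\Pi_\TT^0)\widetilde u}{} + \norm{\Pi_\TT^0\widetilde u - u_\TT}{} + \norm{(1-\Pi_\TT^0)(\widetilde u - u_\TT^\star)}{}
\end{align*}
and then claim that the first term is $\lesssim h^2\norm{D^2\widetilde u}{}$ (or $\lesssim h^{1+s_\Omega}\norm{\widetilde u}{1+s_\Omega}$). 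This is false: $\Pi_\TT^0$ is the $L^2(\Omega)$-orthogonal projection onto \emph{piecewise constants}, which has only first-order approximation properties; no amount of additional regularity of $\widetilde u$ improves $\norm{(1-\Pi_\TT^0)\widetilde u}{}$ beyond $\OO(h)\norm{\nabla\widetilde u}{}$, since $\PP^0(\TT)$ does not contain affine functions. With that term present your bound saturates at $\OO(h)$ and the claimed rate $h^{s_\Omega+\min\{s,s_\Omega\}}$ cannot be reached (nor $h^2$ in the convex case). The same misstep drives your worry about controlling $\norm{\QCLw_\TT f}{-1+s_\Omega}$, which for $s<s_\Omega$ would in fact incur an inverse-estimate loss.

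The repair is immediate, and it is what the paper does: the term $\norm{(1-\Pi_\TT^0)\widetilde u}{}$ should not appear at all. Since $\Pi_\TT^0 u_\TT^\star = u_\TT$, one has the exact identity $\Pi_\TT^0(\widetilde u-u_\TT^\star)=\Pi_\TT^0\widetilde u-u_\TT$, hence the two-term split
\begin{align*}
  \norm{\widetilde u-u_\TT^\star}{} \leq \norm{(1-\Pi_\TT^0)(\widetilde u-u_\TT^\star)}{} + \norm{\Pi_\TT^0\widetilde u - u_\TT}{}
\end{align*}
suffices. The first term is $\lesssim \bigl(\sum_{T\in\TT}h_T^2\norm{\nabla(\widetilde u-u_\TT^\star)}{T}^2\bigr)^{1/2}$ by the elementwise Poincar\'e inequality, and the elementwise $H^1$ error of $u_\TT^\star$ is bounded by the local best approximation of $\widetilde u$ in $\PP^1(T)$ plus $\norm{\widetilde\ssigma-\ssigma_\TT}{T}$, exactly as you sketch; together with Corollaries~\ref{cor:quasiopt:mixed} and~\ref{cor:quasiopt:fosls} this lands at $h^{1+\min\{s,s_\Omega\}}\lesssim h^{s_\Omega+\min\{s,s_\Omega\}}$. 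With that correction your argument matches the paper's proof.
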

\begin{proof}
  Using the triangle inequality and $\Pi_\TT^0u_\TT^\star = u_\TT = \Pi_\TT^0 u_\TT$ we get 
  \begin{align}\label{eq:proof:postproc}
  \begin{split}
    \norm{u-u_\TT^\star}{} &\leq \norm{u-\widetilde u}{} + \norm{\widetilde u-u_\TT^\star}{} 
    \\
    &\leq \norm{u-\widetilde u}{} + \norm{(1-\Pi_\TT^0)(\widetilde u-u_\TT^\star)}{} + \norm{\Pi_\TT^0(\widetilde u- u_\TT^\star)}{} 
    \\
    &\lesssim \norm{u-\widetilde u}{} + \left(\sum_{T\in\TT} h_T^2\norm{\nabla(\widetilde u- u_\TT^\star)}T^2\right)^{1/2} + 
    \norm{\Pi_\TT^0\widetilde u- u_\TT}{}.
  \end{split}
  \end{align}
  The last term on the right-hand side is estimated with Lemma~\ref{lem:superclose}, 
  \begin{align*}
    \norm{\Pi_\TT^0\widetilde u- u_\TT}{} \lesssim h^{s_\Omega+\min\{s,s_\Omega\}} \norm{f}{-1+\min\{s,s_\Omega\}}.
  \end{align*}
  For the first term on the right-hand side of~\eqref{eq:proof:postproc} let $v\in H_0^1(\Omega)$ denote the solution of $-\Delta v = u-\widetilde u$. Then, with elliptic regularity and the properties of $\QCLw_\TT$ resp. $(\QCLw_\TT)'$ we infer that 
  \begin{align*}
    \norm{u-\widetilde u}{}^2 &= \ip{u-\widetilde u}{-\Delta v} = \ip{(1-\QCLw_\TT)f}{v} = \ip{(1-\QCLw_\TT)f}{(1-(\QCLw_\TT)')v} \\
    &\lesssim \norm{(1-\QCLw_\TT)f}{-1} \norm{(1-(\QCLw_\TT)')v}{1} \lesssim h^{\min\{s,s_\Omega\}}\norm{f}{-1+\min\{s,s_\Omega\}} h^{s_\Omega} \norm{v}{1+s_\Omega} 
    \\
    &\lesssim h^{s_\Omega+\min\{s,s_\Omega\}}\norm{f}{-1+\min\{s,s_\Omega\}}\norm{u-\widetilde u}{}.
  \end{align*}
  The remaining term on the right-hand side of~\eqref{eq:proof:postproc} is estimated following arguments similar as, e.g., found in~\cite[Proof of Theorem~2.2]{Stenberg91} or~\cite[Section~3.5]{SupConv2}. For the sake of completeness we repeat the steps here. 
  Let $\widetilde u_\TT\in\PP^1(\TT)$ denote the solution of the local Neumann problems
  \begin{align*}
    \ip{\nabla\widetilde u_\TT}{\nabla v}_T &= \ip{\widetilde\ssigma}{\nabla v}_T \quad\forall v\in \PP^1(T), \\
    \ip{\widetilde u_T}1_T &= 0
  \end{align*}
  for each $T\in\TT$. 
  Note that $\widetilde u_\TT$ is the (elementwise) Galerkin approximation of $\widetilde u$. Therefore, 
  \begin{align*}
    \norm{\nabla(\widetilde u-u_\TT^\star)}T &\leq \norm{\nabla(\widetilde u-\widetilde u_\TT)}T + \norm{\nabla(\widetilde u_\TT-u_\TT^\star)}T
    \\
    &\leq \norm{\nabla(\widetilde u-v)}T + \norm{\widetilde\ssigma-\ssigma_\TT}{T}
  \end{align*}
  for all $v\in \PP^1(T)$. Multiplying by $h_T$, summing over all elements, and using the triangle inequality we conclude that
  \begin{align}\label{eq:postproc2}
    \left(\sum_{T\in\TT} h_T^2\norm{\nabla(\widetilde u- u_\TT^\star)}T^2\right)^{1/2} &\lesssim h \norm{\nabla(u-\widetilde u)}{} + h \min_{v\in U_\TT}\norm{u-v}1 + h\norm{\ssigma-\ssigma_\TT}{}.
  \end{align}
  The first term on the right-hand side of~\eqref{eq:postproc2} is estimated using stability of the Poisson problem and properties of $\QCLw_\TT$ leading to
  \begin{align*}
    h\norm{\nabla(u-\widetilde u)}{} \eqsim h\norm{(1-\QCLw_\TT)f}{-1} \lesssim h^{1+\min\{s,s_\Omega\}} \norm{f}{-1+\min\{s,s_\Omega\}}.
  \end{align*}
  For the remaining terms on the right-hand side of~\eqref{eq:postproc2} we use Corollary~\ref{cor:quasiopt:mixed} and Theorem~\ref{thm:quasiopt:fosls} together with Corollary~\ref{cor:quasiopt:fosls} to conclude that
  \begin{align*}
    h \min_{v\in U_\TT}\norm{u-v}1 + h\norm{\ssigma-\ssigma_\TT}{} \lesssim h^{1+\min\{s,s_\Omega\}}\norm{f}{-1+\min\{s,s_\Omega\}}.
  \end{align*}
  Combining all estimates and using $h\lesssim h^{s_\Omega}$ finishes the proof.
\end{proof}

Another consequence of Lemma~\ref{lem:superclose} is the following result.
\begin{corollary}\label{cor:uL2close}
  Let $f\in H^{-1+s}(\Omega)$ for some $s\in[0,1]$ and let $(u_\TT,\ssigma_\TT)\in W_\TT\times \Sigma_\TT$ denote the solution of~\eqref{eq:mixed:mod} with $Q_\TT^\star = \QCLw_\TT$.
  We have that
  \begin{align*}
    \norm{u-u_\TT}{} \leq \norm{u-\Pi_\TT^0u}{} + Ch^{s_\Omega+\min\{s,s_\Omega\}}\norm{f}{-1+\min\{s,s_\Omega\}},
  \end{align*}
  where $C>0$ denotes a generic constant.
\end{corollary}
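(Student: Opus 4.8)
The plan is to reduce everything to Lemma~\ref{lem:superclose} and the intermediate bound on $\norm{u-\widetilde u}{}$ already derived inside the proof of Theorem~\ref{thm:postproc}, via a single triangle inequality. The crucial point is that the coefficient in front of $\norm{u-\Pi_\TT^0 u}{}$ must be exactly one, so I start from the split
\begin{align*}
  \norm{u-u_\TT}{} \leq \norm{u-\Pi_\TT^0 u}{} + \norm{\Pi_\TT^0 u - u_\TT}{},
\end{align*}
and the entire task becomes bounding the second term by the stated higher-order quantity.

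To control $\norm{\Pi_\TT^0 u - u_\TT}{}$, I would introduce the auxiliary solution $\widetilde u$ of~\eqref{eq:aux} (with $\widetilde\ssigma=\nabla\widetilde u$) and insert $\Pi_\TT^0 \widetilde u$ via a second triangle inequality,
\begin{align*}
  \norm{\Pi_\TT^0 u - u_\TT}{} \leq \norm{\Pi_\TT^0(u-\widetilde u)}{} + \norm{\Pi_\TT^0 \widetilde u - u_\TT}{}.
\end{align*}
The last term is exactly what Lemma~\ref{lem:superclose} estimates, giving $\norm{\Pi_\TT^0 \widetilde u - u_\TT}{} \lesssim h^{s_\Omega+\min\{s,s_\Omega\}}\norm{f}{-1+\min\{s,s_\Omega\}}$. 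For the first term I use that $\Pi_\TT^0$ is the $L^2(\Omega)$-orthogonal projection and hence $L^2$-stable, so $\norm{\Pi_\TT^0(u-\widetilde u)}{}\leq \norm{u-\widetilde u}{}$; then I invoke the bound $\norm{u-\widetilde u}{}\lesssim h^{s_\Omega+\min\{s,s_\Omega\}}\norm{f}{-1+\min\{s,s_\Omega\}}$, which is precisely the duality/elliptic-regularity estimate established in the first display of the proof of Theorem~\ref{thm:postproc} (using $-\Delta v = u-\widetilde u$, the bound $\norm{(1-\QCLw_\TT)f}{-1}\lesssim h^{\min\{s,s_\Omega\}}\norm{f}{-1+\min\{s,s_\Omega\}}$ from Theorem~\ref{thm:Qprojmod}, and the $H^{1+s_\Omega}$-approximation of $(\QCLw_\TT)'$).

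Combining these three ingredients yields
\begin{align*}
  \norm{u-u_\TT}{} \leq \norm{u-\Pi_\TT^0 u}{} + C h^{s_\Omega+\min\{s,s_\Omega\}}\norm{f}{-1+\min\{s,s_\Omega\}},
\end{align*}
which is the claim. I do not expect any real obstacle here: all the analytic work (the supercloseness of Lemma~\ref{lem:superclose} and the duality estimate for $\norm{u-\widetilde u}{}$) is already in place, so the corollary is essentially a bookkeeping argument with two triangle inequalities and the $L^2$-stability of $\Pi_\TT^0$. The only point deserving mild care is keeping the coefficient of the best-approximation term equal to one rather than absorbing it into the generic constant, which the chosen order of splitting guarantees.
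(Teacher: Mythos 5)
Your proposal is correct and follows essentially the same route as the paper: the paper's proof is exactly the three-term triangle inequality $\norm{u-u_\TT}{} \leq \norm{u-\Pi_\TT^0u}{} + \norm{\Pi_\TT^0(u-\widetilde u)}{} + \norm{\Pi_\TT^0\widetilde u-u_\TT}{}$, followed by the $L^2$-stability of $\Pi_\TT^0$, Lemma~\ref{lem:superclose} for the last term, and the duality estimate for $\norm{u-\widetilde u}{}$ from the proof of Theorem~\ref{thm:postproc}. Your two-step splitting is just this same decomposition written out more slowly, so no differences to report.
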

\begin{proof}
  By the triangle inequality, 
  \begin{align*}
    \norm{u-u_\TT}{} &\leq \norm{u-\Pi_\TT^0u}{} + \norm{\Pi_\TT(u-\widetilde u)}{} + \norm{\Pi_\TT\widetilde u-u_\TT}{}
    \\
    &\leq \norm{u-\Pi_\TT^0u}{} + \norm{u-\widetilde u}{} + \norm{\Pi_\TT\widetilde u-u_\TT}{}.
  \end{align*}
  The last term is estimated with Lemma~\ref{lem:superclose} and the middle term is estimated as in the proof of Theorem~\ref{thm:postproc}.
\end{proof}

\subsection{$L^2$ convergence of FOSLS}
In this section we study the $L^2(\Omega)$ error  $\norm{u-u_\TT}{}$, 
where $(u_\TT,\ssigma_\TT)\in U_\TT\times \Sigma_\TT$ is the solution to the modified FOSLS~\eqref{eq:fosls:mod}. 
For convex domains and $f\in L^2(\Omega)$ we already studied $L^2(\Omega)$ convergence rates in~\cite[Theorem~18]{MINRESsingular} when using the operator $Q_\TT^\star=\QCL_\TT$. 
The following theorem extends the findings of~\cite[Section~4]{MINRESsingular} for $Q_\TT^\star = \QCLw_\TT$ and non-convex domains. 
Its proof follows the same ideas as in~\cite[Section~4]{MINRESsingular} but for sake of completeness we repeat the main arguments here.
Related works on $L^2(\Omega)$ error estimates for the FOSLS include~\cite{CaiKu2010,Ku2011}.
It is important to note that optimal rates, i.e., $\norm{u-u_\TT}{} = \OO(h^2)$ can not be expected for solutions of~\eqref{eq:fosls:standard} even if $f\in L^2(\Omega)$. 
Indeed, we presented a numerical experiment in~\cite[Section~5]{MINRESsingular} that confirms this.

\begin{theorem}\label{thm:L2estimate:fosls}
  Let $f\in H^{-1+s}(\Omega)$ for some $s\in[0,1]$ and let $(u_\TT,\ssigma_\TT)\in U_\TT\times \Sigma_\TT$ denote the solution of~\eqref{eq:fosls:mod} with $Q_\TT^\star = \QCLw_\TT$.
  We have that
  \begin{align*}
    \norm{u-u_\TT}{} \lesssim h^{s_\Omega+\min\{s,s_\Omega\}} \norm{f}{-1+\min\{s,s_\Omega\}}.
  \end{align*}
  In particular, if $\Omega$ is convex and $f\in L^2(\Omega)$, then
  \begin{align*}
    \norm{u-u_\TT}{} \lesssim h^2 \norm{f}{}.
  \end{align*}
\end{theorem}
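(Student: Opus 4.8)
The plan is to prove the $L^2$ error estimate for the modified FOSLS by a duality (Aubin--Nitsche) argument, paralleling the postprocessing estimate of Theorem~\ref{thm:postproc}. First I would introduce the auxiliary function $\widetilde u\in H_0^1(\Omega)$ solving $-\Delta\widetilde u=\QCLw_\TT f$ with $\widetilde\ssigma=\nabla\widetilde u$, and split the error via the triangle inequality as
\begin{align*}
  \norm{u-u_\TT}{} \leq \norm{u-\widetilde u}{} + \norm{\widetilde u-u_\TT}{}.
\end{align*}
The first term is handled exactly as in the proof of Theorem~\ref{thm:postproc}: using the solution $v$ of $-\Delta v=u-\widetilde u$, elliptic regularity~\eqref{eq:ellipticreg}, the projection and boundedness properties of $\QCLw_\TT$ and $(\QCLw_\TT)'$ from Theorem~\ref{thm:Qprojmod}, together with the identity $\norm{u-\widetilde u}{1}=\norm{(1-\QCLw_\TT)f}{-1}$, one obtains $\norm{u-\widetilde u}{}\lesssim h^{s_\Omega+\min\{s,s_\Omega\}}\norm{f}{-1+\min\{s,s_\Omega\}}$.

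The main work is the second term $\norm{\widetilde u-u_\TT}{}$, where $u_\TT$ is now the scalar part of the FOSLS solution rather than a piecewise-constant mixed solution. I would run an Aubin--Nitsche duality: let $w\in H_0^1(\Omega)$ solve $-\Delta w=\widetilde u-u_\TT$ and test the FOSLS residual against the corresponding first-order quantities. The key is Galerkin orthogonality of the FOSLS together with the fact that $\div\ssigma_\TT=-\QCLw_\TT f=\div\widetilde\ssigma$, so the divergence residual vanishes. Writing
\begin{align*}
  \norm{\widetilde u-u_\TT}{}^2 = \ip{\nabla(\widetilde u-u_\TT)}{\nabla w} = \ip{\nabla(\widetilde u-u_\TT)-(\widetilde\ssigma-\ssigma_\TT)}{\nabla w} + \ip{\widetilde\ssigma-\ssigma_\TT}{\nabla w},
\end{align*}
I would exploit elliptic regularity $\norm{w}{1+s_\Omega}\lesssim\norm{\widetilde u-u_\TT}{}$, insert a discrete approximation of $\nabla w$ (using $\Pi_\TT^{\RT}$ and a piecewise interpolant), and use the orthogonality to extract a gain of $h^{s_\Omega}$ against the FOSLS energy error. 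This reduces matters to bounding the canonical-norm error $\norm{\widetilde u-u_\TT}{1}+\norm{\widetilde\ssigma-\ssigma_\TT}{}$, which by quasi-optimality (Theorem~\ref{thm:quasiopt:fosls} and Corollary~\ref{cor:quasiopt:fosls}) plus the triangle inequality to $\widetilde u$ is of order $h^{\min\{s,s_\Omega\}}\norm{f}{-1+\min\{s,s_\Omega\}}$.

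Combining the two contributions yields the claimed rate $h^{s_\Omega+\min\{s,s_\Omega\}}$, and the convex case follows by setting $s_\Omega=1$ and $s=1$. The hard part I expect is the duality step for $\norm{\widetilde u-u_\TT}{}$: unlike the mixed setting, the FOSLS does not satisfy $\Pi_\TT^0 u_\TT=u_\TT$ on $\PP^0(\TT)$, so the supercloseness Lemma~\ref{lem:superclose} does not apply directly, and I must carefully organize the orthogonality so that \emph{both} the gradient and the flux residuals against the smooth dual solution pick up the extra regularity factor $h^{s_\Omega}$. Managing these two residual terms simultaneously -- while ensuring the approximation of $\nabla w$ respects $\div\Pi_\TT^{\RT}\nabla w=\Pi_\TT^0\div\nabla w$ so that no uncontrolled divergence term survives -- is the technical crux; everything else reduces to the already-established properties of $\QCLw_\TT$ and the quasi-optimality results.
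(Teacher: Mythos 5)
Your proposal is correct and follows essentially the same route as the paper's proof: the same splitting via the auxiliary solution $\widetilde u$ of $-\Delta\widetilde u=\QCLw_\TT f$, the same treatment of $\norm{u-\widetilde u}{}$ by duality and the properties of $\QCLw_\TT$, and the same Aubin--Nitsche argument for $\norm{\widetilde u-u_\TT}{}$ using the dual first-order system, the commuting projector $\Pi_\TT^{\RT}$, and FOSLS Galerkin orthogonality so that the divergence residual vanishes against $\PP^0(\TT)$. The technical crux you identify is exactly the one the paper resolves by introducing $(v,\ttau)$ with $\div\ttau=-w$ and $\nabla v-\ttau=\nabla w$ and choosing $\ttau_\TT=\Pi_\TT^{\RT}\ttau$, $v_\TT=\JSZ_\TT v$.
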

\begin{proof}
  Considering the splitting $u-u_\TT = u-\widetilde u + \widetilde u-u_\TT$ we get that
  \begin{align*}
    \norm{u-u_\TT}{} \leq \norm{u-\widetilde u}{} + \norm{\widetilde u-u_\TT}{} \lesssim h^{s_\Omega+\min\{s,s_\Omega\}}\norm{f}{-1+\min\{s,s_\Omega\}} + \norm{\widetilde u-u_\TT}{},
  \end{align*}
  where the last estimate follows as in the proof of Theorem~\ref{thm:postproc}.
  Following the proof of~\cite[Theorem~18]{MINRESsingular}, let $w\in H_0^1(\Omega)$ denote the solution of $-\Delta w = \widetilde u-u_\TT$ and let $(v,\ttau)\in H_0^1(\Omega)\times \Hdivset\Omega$ denote the unique solution of the first-order system
  \begin{align*}
    \div\ttau &= -w, \\
    \nabla v-\ttau &= \nabla w.
  \end{align*}
  Then, for any $(v_\TT,\ttau_\TT)\in U_\TT\times\Sigma_\TT$,
  \begin{align*}
    \norm{\widetilde u-u_\TT}{}^2 &= \ip{\widetilde u-u_\TT}{-\Delta w} = \ip{\nabla(\widetilde u-u_\TT)}{\nabla w} 
    \\
    &= \ip{\nabla(\widetilde u-u_\TT)-(\widetilde\ssigma-\ssigma_\TT)}{\nabla w} + \ip{(\widetilde\ssigma-\ssigma_\TT)}{\nabla w}
    \\
    &= \ip{\nabla(\widetilde u-u_\TT)-(\widetilde\ssigma-\ssigma_\TT)}{\nabla v-\ttau} + \ip{\div(\widetilde\ssigma-\ssigma_\TT)}{\div\ttau}
    \\
    &= \ip{\nabla(\widetilde u-u_\TT)-(\widetilde\ssigma-\ssigma_\TT)}{\nabla(v-v_\TT)-(\ttau-\ttau_\TT)} + \ip{\div(\widetilde\ssigma-\ssigma_\TT)}{\div(\ttau-\ttau_\TT)}.
  \end{align*}
  The last identity follows from Galerkin orthogonality (this can be seen by writing down the Euler--Lagrange equations of~\eqref{eq:fosls:mod}).
  Choosing $\ttau_\TT = \Pi_\TT^{\RT}\ttau$ and $v_\TT = \JSZ_\TT v$ we get that $\div(\ttau-\ttau_\TT) = (1-\Pi_\TT^0)(-w)$ and since $\div(\widetilde\ssigma-\ssigma_\TT)\in \PP^0(\TT)$ we also have that 
  \begin{align*}
    \ip{\div(\widetilde\ssigma-\ssigma_\TT)}{\div(\ttau-\ttau_\TT)} = 0.
  \end{align*}
  Using the approximation properties of $\Pi_\TT^{\RT}$ and $\JSZ_\TT$ as well as elliptic regularity we further see that
  \begin{align*}
    \norm{{\nabla(v-v_\TT)-(\ttau-\ttau_\TT)}}{} \lesssim h^{s_\Omega}\norm{v}{1+s_\Omega} + h\norm{(1-\Pi_\TT^0)\div\ttau}{} 
    \lesssim h^{s_\Omega}\norm{\widetilde u-u_\TT}{} + h^2 \norm{\widetilde u-u_\TT}{}.
  \end{align*}
  Here we have used that $\norm{v}{1+s_\Omega}\lesssim \norm{\widetilde u-u_\TT}{}$. This estimate follows from $\Delta v = \Delta w -w$ and elliptic regularity, i.e.,
    \begin{align*}
      \norm{v}{1+s_\Omega}\lesssim \norm{\Delta w}{} + \norm{w}{} \lesssim \norm{\widetilde u-u_\TT}{}.
  \end{align*}
  Putting the above estimates together and using the triangle inequality we infer that
  \begin{align*}
    \norm{\widetilde u-u_\TT}{} &\lesssim h^{s_\Omega}(\norm{\nabla(\widetilde u-u_\TT)}{}+\norm{\widetilde\ssigma-\ssigma_\TT}{})
    \\
    &\lesssim h^{s_\Omega}(\norm{u-u_\TT}{1} + \norm{\ssigma-\ssigma_\TT}{}) + h^{s_\Omega} \norm{u-\widetilde u}{1}
    \\
    &\lesssim h^{s_\Omega}(\norm{u-u_\TT}{1} + \norm{\ssigma-\ssigma_\TT}{}) + h^{s_\Omega} h^{\min\{s,s_\Omega\}}\norm{f}{-1+\min\{s,s_\Omega\}}.
  \end{align*}
  The estimate $\norm{u-\widetilde u}{1} \lesssim h^{\min\{s,s_\Omega\}}\norm{f}{-1+\min\{s,s_\Omega\}}$ has already been used in the proof of Lemma~\ref{lem:superclose}. 
  An application of Corollary~\ref{cor:quasiopt:fosls} finishes the proof.
\end{proof}

\section{Numerical experiments}\label{sec:ex}
We have already studied the FOSLS with $H^{-1}(\Omega)$ loads in our recent work~\cite{MINRESsingular} and we have presented various numerical results in~\cite[Section~5]{MINRESsingular}. 
In Section~\ref{sec:ex:weightedClement} we compare the standard Cl\'ement quasi-interpolator $\JCL_\TT$ to the weighted version $\JCLw_\TT$ for a simple problem in 1D.
Section~\ref{sec:ex:mixed:Hms} deals with a problem where the load is not in $L^2(\Omega)$. 
In Section~\ref{sec:ex:mixed:L2} we consider a problem with $L^2(\Omega)$ load and compare the (postprocessed) solutions of~\eqref{eq:mixed:standard} and~\eqref{eq:mixed:mod}. 
Finally, in Section~\ref{sec:ex:fosls} we compare solutions of the standard FOSLS~\eqref{eq:fosls:standard} and the regularized FOSLS~\eqref{eq:fosls:mod} for a benchmark problem from~\cite{BringmannComp22}.

\subsection{Weighted Cl\'ement operator}\label{sec:ex:weightedClement}
We consider a one-dimensional example and compare the Cl\'ement quasi-interpolator $\JCL_\TT$ to the weighted variant $\JCLw_\TT$. 
To that end let $\Omega = (0,1)$ and $u(x)=\sin(\pi x)$. Clearly, $u\in H^2(\Omega)\cap H_0^1(\Omega)$. 
First, we consider a sequence of meshes where each mesh is a uniform partition of $\Omega$. 
It can be verified by using Example~\ref{ex:CL1d} that $\JCL_\TT = \JCLw_\TT$ and we expect that $\norm{u-\JCL_\TT}{} = \OO(h^2)$. 
This is confirmed by our computations, see Figure~\ref{fig:1d}. 
Next, we consider a sequence of meshes $\TT_1,\TT_2,\dots$. Each mesh $\TT_j$ is a partition $x_0<x_1<x_2,\dots$ of $\Omega$ such that two adjacent elements have different lengths but the overall mesh is quasi-uniform, i.e.,
\begin{align*}
  x_1-x_0 = h, \quad x_2-x_1 = 2h, \quad x_3-x_2 = h, \quad x_4-x_3 = 2h, \dots
\end{align*}
We expect that 
\begin{align*}
  \norm{u-\JCL_\TT}{} = \OO(h) \quad\text{and}\quad \norm{u-\JCLw_\TT u}{} = \OO(h^2). 
\end{align*}
This is confirmed by our numerical experiment, see the right plot of Figure~\ref{fig:1d}.

\begin{figure}
  \begin{center}
    \begin{tikzpicture}
\begin{loglogaxis}[
    title={Uniform mesh-size},
width=0.49\textwidth,
cycle list/Dark2-6,
cycle multiindex* list={
mark list*\nextlist
Dark2-6\nextlist},
every axis plot/.append style={ultra thick},
xlabel={number of elements},
grid=major,
legend entries={\small $\|u-\JCL_\TT u\|=\|u-\JCLw_\TT u\|$},
legend pos=south west,
]
\addplot table [x=nE,y=errVL2] {data/ClementIntUnif.dat};
\addplot [black,dotted,mark=none] table [x=nE,y expr={0.6*sqrt(\thisrowno{1})^(1)}] {data/ClementIntUnif.dat};
\addplot [black,dotted,mark=none] table [x=nE,y expr={0.6*sqrt(\thisrowno{1})^(2)}] {data/ClementIntUnif.dat};
\end{loglogaxis}
\end{tikzpicture}
\begin{tikzpicture}
\begin{loglogaxis}[
    title={Non-uniform mesh-size},
width=0.49\textwidth,
cycle list/Dark2-6,
cycle multiindex* list={
mark list*\nextlist
Dark2-6\nextlist},
every axis plot/.append style={ultra thick},
xlabel={number of elements},
grid=major,
legend entries={\small $\|u-\JCL_\TT u\|$,\small $\|u-\JCLw_\TT u\|$},
legend pos=south west,
]
\addplot table [x=nE,y=errVL2] {data/ClementIntNonUnif.dat};
\addplot table [x=nE,y=errVL2w] {data/ClementIntNonUnif.dat};
\addplot [black,dotted,mark=none] table [x=nE,y expr={0.6*sqrt(\thisrowno{1})^(1)}] {data/ClementIntNonUnif.dat};
\addplot [black,dotted,mark=none] table [x=nE,y expr={0.6*sqrt(\thisrowno{1})^(2)}] {data/ClementIntNonUnif.dat};
\end{loglogaxis}
\end{tikzpicture}
  \end{center}
  \caption{$L^2(\Omega)$ errors of the quasi-interpolants $\JCL_\TT u$ and $\JCLw_\TT u$ for $u(x) = \sin(\pi x)$.
    The left plot shows the errors where each mesh is a uniform partition of $\Omega=(0,1)$. 
    The right plot shows the errors where two adjacent elements have different mesh-sizes, but each mesh is quasi-uniform, see Section~\ref{sec:ex:weightedClement}.
    Black dotted lines indicate $\OO(h)$ and $\OO(h^2)$.}\label{fig:1d}
\end{figure}
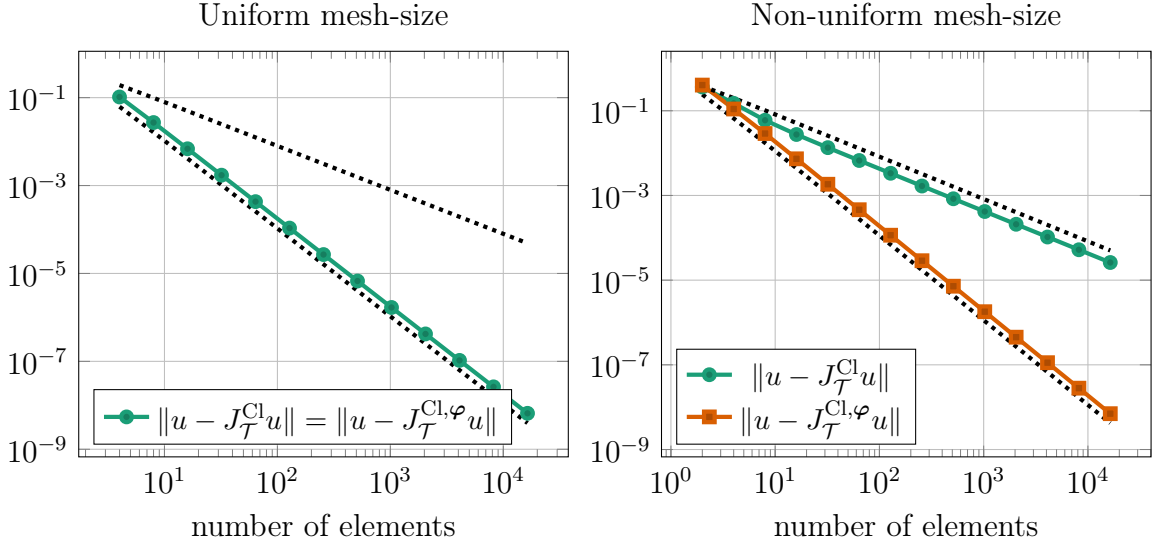

\subsection{Mixed method with $H^{-1}(\Omega)$ load}\label{sec:ex:mixed:Hms}
Let $\Omega = (0,1)^2$ and consider the manufactured solution
\begin{align*}
  u(x,y) = |x-y|^{3/4} \sin(\pi x)\sin(\pi y), \quad (x,y)\in \Omega.
\end{align*}
This solution has also been considered in~\cite[Section~5]{MINRESsingular} for the regularized FOSLS.
We have that $f=-\Delta u \in H^{-1+1/4-\varepsilon}(\Omega)$ for $\varepsilon>0$. 
We study the errors of the solutions $(u_\TT,\ssigma_\TT)$ of the regularized mixed FEM~\eqref{eq:mixed:mod} with $Q_\TT^\star = \QCLw_\TT$.
Recall that $u_\TT^\star\in \PP^1(\TT)$ denotes the postprocessed solution.
The errors are displayed in Figure~\ref{fig:mixed:Hms}. 
It can be observed that
\begin{align*}
  \norm{u-u_\TT}{}=\OO(h), \quad \norm{\ssigma-\ssigma_\TT}{} = \OO(h^{1/4}), \quad
  \norm{u-u_\TT^\star}{} = \OO(h^{1+1/4}),
\end{align*}
in accordance (omitting $\varepsilon$) with the results derived in this work (Corollary~\ref{cor:quasiopt:mixed}, Theorem~\ref{thm:postproc}, and Corollary~\ref{cor:uL2close}).

\begin{figure}
  \begin{center}
    \begin{tikzpicture}
\begin{loglogaxis}[
width=0.49\textwidth,
cycle list/Dark2-6,
cycle multiindex* list={
mark list*\nextlist
Dark2-6\nextlist},
every axis plot/.append style={ultra thick},
xlabel={degrees of freedom},
grid=major,
legend entries={\small $\|u-u_\TT\|$,\small $\|\ssigma-\ssigma_\TT\|$,\small $\|u-u^\star_\TT\|$},
legend pos=south west,
]
\addplot table [x=dofMIXED,y=errUL2] {data/MixedHms.dat};
\addplot table [x=dofMIXED,y=errSigmaL2] {data/MixedHms.dat};
\addplot table [x=dofMIXED,y=errUstar] {data/MixedHms.dat};
\addplot [black,dotted,mark=none] table [x=dofMIXED,y expr={0.6*sqrt(\thisrowno{1})^(-1-1/4)}] {data/MixedHms.dat};
\addplot [black,dotted,mark=none] table [x=dofMIXED,y expr={0.6*sqrt(\thisrowno{1})^(-1)}] {data/MixedHms.dat};
\addplot [black,dotted,mark=none] table [x=dofMIXED,y expr={0.8*sqrt(\thisrowno{1})^(-1/4)}] {data/MixedHms.dat};
\end{loglogaxis}
\end{tikzpicture}
  \end{center}
  \caption{Results for the mixed method for the experiment from Section~\ref{sec:ex:mixed:Hms}. Black dotted lines indicate $\OO(h^{1/4})$, $\OO(h)$ and $\OO(h^{1+1/4})$.}\label{fig:mixed:Hms}
\end{figure}
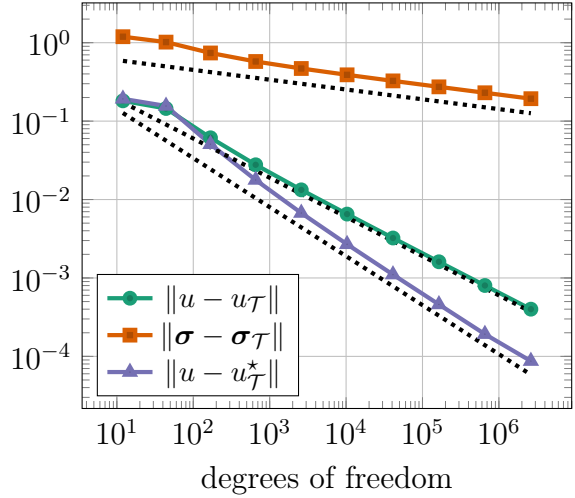

\subsection{Postprocessing in the mixed FEM with $L^2(\Omega)$ load}\label{sec:ex:mixed:L2}
We consider the problem setup of~\cite[Section~6.3]{MINRESsingular}, i.e., the manufactured solution
\begin{align*}
  u(x,y) = x|x|^{1/2+1/128}(1-x^2)(1-y^2), \quad(x,y)\in \Omega:=(-1,1)^2.
\end{align*}
One verifies that $u\in H^2(\Omega)\cap H_0^1(\Omega)$, thus, $f:=-\Delta u \in L^2(\Omega)$ but $f\notin H^s(\Omega)$ for $s\geq \tfrac1{128}$.
In particular, the standard mixed method~\eqref{eq:mixed:standard} and the modified mixed method~\eqref{eq:mixed:mod} with $Q_\TT^\star = \QCLw_\TT$ are well defined. 
Table~\ref{tab:mixed:L2} shows a comparison of the errors: For both methods we observe that
\begin{align*}
  \norm{u-u_\TT}{} + \norm{\ssigma-\ssigma_\TT}{} = \OO(h),
\end{align*}
as expected, whereas for the postprocessed solutions we see that 
\begin{align*}
  \norm{u-u_\TT^\star}{} = 
  \begin{cases}
    \OO(h^{3/2}) & \text{for method~\eqref{eq:mixed:standard}}, \\
    \OO(h^2) &\text{for method~\eqref{eq:mixed:mod}}.
  \end{cases}
\end{align*}
The fact that the postprocessed solution of the modified mixed FEM converges optimally, fits the theory (Theorem~\ref{thm:postproc}). 
We note that, although $f\in L^2(\Omega)$ (so that there would be no need to regularize the datum), the regularized method seems to deliver more accurate solutions. 

\begin{table}[htbp]
  \begin{center}
  {\tiny
  \begin{tabular}{|c|cc|cc|cc|cc|cc|cc|}
    \hline
    \multicolumn{1}{|c|}{} &\multicolumn{6}{|c|}{standard mixed method~\eqref{eq:mixed:standard}} &\multicolumn{6}{|c|}{regularized mixed method~\eqref{eq:mixed:mod}}\\
     \cline{2-13}
     $\#\TT$ & $\norm{\ssigma-\ssigma_\TT}{}$ & eoc & $\norm{u-u_\TT}{}$ & eoc & $\norm{u-u_\TT^\star}{}$ & eoc & $\norm{\ssigma-\ssigma_\TT}{}$ & eoc & $\norm{u-u_\TT}{}$ & eoc & $\norm{u-u_\TT^\star}{}$ & eoc \\\hline
4 & 1.35e+00 & --- & 3.42e-01 & --- & 4.45e-01 & --- & 1.53e+00 & --- & 4.10e-01 & --- & 5.32e-01 & --- \\\hline
16 & 7.95e-01 & 0.77 & 1.57e-01 & 1.12 & 1.46e-01 & 1.61 & 8.04e-01 & 0.92 & 1.65e-01 & 1.31 & 1.56e-01 & 1.77 \\\hline
64 & 4.83e-01 & 0.72 & 9.00e-02 & 0.81 & 5.41e-02 & 1.43 & 4.79e-01 & 0.75 & 9.19e-02 & 0.85 & 5.72e-02 & 1.45 \\\hline
256 & 2.57e-01 & 0.91 & 4.79e-02 & 0.91 & 1.51e-02 & 1.84 & 2.52e-01 & 0.92 & 4.81e-02 & 0.93 & 1.57e-02 & 1.87 \\\hline
1024 & 1.32e-01 & 0.97 & 2.43e-02 & 0.98 & 4.17e-03 & 1.86 & 1.28e-01 & 0.97 & 2.43e-02 & 0.98 & 4.03e-03 & 1.96 \\\hline
4096 & 6.66e-02 & 0.98 & 1.22e-02 & 1.00 & 1.22e-03 & 1.77 & 6.49e-02 & 0.99 & 1.22e-02 & 1.00 & 1.02e-03 & 1.98 \\\hline
16384 & 3.36e-02 & 0.99 & 6.10e-03 & 1.00 & 3.89e-04 & 1.65 & 3.27e-02 & 0.99 & 6.10e-03 & 1.00 & 2.57e-04 & 1.99 \\\hline
65536 & 1.69e-02 & 0.99 & 3.05e-03 & 1.00 & 1.32e-04 & 1.56 & 1.65e-02 & 0.99 & 3.05e-03 & 1.00 & 6.47e-05 & 1.99 \\\hline
262144 & 8.51e-03 & 0.99 & 1.52e-03 & 1.00 & 4.60e-05 & 1.52 & 8.30e-03 & 0.99 & 1.52e-03 & 1.00 & 1.63e-05 & 1.99 \\\hline
  \end{tabular}
}
\end{center}
\caption{Comparison between the two mixed FEMs for the problem described in Section~\ref{sec:ex:mixed:L2}.}
  \label{tab:mixed:L2}
\end{table}

\subsection{Comparison of standard and regularized FOSLS}\label{sec:ex:fosls}
We consider the \emph{Waterfall benchmark} problem from~\cite[Section~5.4]{BringmannComp22} with manufactured solution
\begin{align*}
  u(x,y) = x(x-1)y(y-1)e^{-100(x-1/2)^2-(y-117)^2/10000}, \quad (x,y)\in \Omega:=(0,1)^2.
\end{align*}
We note that $f:=-\Delta u\in L^2(\Omega)$, and by elliptic regularity~\eqref{eq:ellipticreg} we have $u\in H^2(\Omega)\cap H_0^1(\Omega)$.
For this experiment we compare the accuracy of the standard FOSLS~\eqref{eq:fosls:standard} and the modified FOSLS~\eqref{eq:fosls:mod} with $Q_\TT^\star = \QCLw_\TT$.
Table~\ref{tab:fosls} shows the errors $\norm{\ssigma-\ssigma_\TT}{}$, $\norm{u-u_\TT}{}$, and $\norm{u-u_\TT}1$ for both methods. 
As expected $\norm{\ssigma-\ssigma_\TT}{}$ and $\norm{u-u_\TT}1$ converge at the optimal rate. From Table~\ref{tab:fosls} we even see that the absolute values of $\norm{\ssigma-\ssigma_\TT}{}$ and $\norm{u-u_\TT}1$ for both methods are not distinguishable as $h\to 0$.
Only for the $L^2(\Omega)$ errors $\norm{u-u_\TT}{}$ we see a notable difference. 
Asymptotically, one finds that the $L^2(\Omega)$ error in the primal variable for the standard FOSLS is about $42\%$ larger than the $L^2(\Omega)$ error for the regularized FOSLS.

\begin{table}[htbp]
  \begin{center}
  {\tiny
  \begin{tabular}{|c|cc|cc|cc|cc|cc|cc|}
    \hline
    \multicolumn{1}{|c|}{} &\multicolumn{6}{|c|}{regularized FOSLS~\eqref{eq:fosls:mod}} &\multicolumn{6}{|c|}{standard FOSLS~\eqref{eq:fosls:standard}}\\
     \cline{2-13}
     $\#\TT$ & $\norm{\ssigma-\ssigma_\TT}{}$ & eoc & $\norm{u-u_\TT}{}$ & eoc & $\norm{u-u_\TT}{1}$ & eoc & $\norm{\ssigma-\ssigma_\TT}{}$ & eoc & $\norm{u-u_\TT}{}$ & eoc & $\norm{u-u_\TT}{1}$ & eoc \\\hline
4 & 3.37e-02 & --- & 4.43e-03 & --- & 3.41e-02 & --- & 3.79e-02 & --- & 5.11e-03 & --- & 3.64e-02 & --- \\\hline
16 & 4.08e-02 & -0.27 & 4.19e-03 & 0.08 & 4.44e-02 & -0.38 & 3.93e-02 & -0.05 & 3.08e-03 & 0.73 & 4.23e-02 & -0.22 \\\hline
64 & 2.59e-02 & 0.65 & 1.69e-03 & 1.31 & 2.84e-02 & 0.64 & 2.60e-02 & 0.60 & 1.75e-03 & 0.81 & 2.96e-02 & 0.52 \\\hline
256 & 1.47e-02 & 0.81 & 5.37e-04 & 1.66 & 1.59e-02 & 0.84 & 1.51e-02 & 0.78 & 7.22e-04 & 1.28 & 1.72e-02 & 0.78 \\\hline
1024 & 7.43e-03 & 0.99 & 1.85e-04 & 1.54 & 1.08e-02 & 0.56 & 7.33e-03 & 1.04 & 2.41e-04 & 1.58 & 1.09e-02 & 0.66 \\\hline
4096 & 3.68e-03 & 1.01 & 4.56e-05 & 2.02 & 5.49e-03 & 0.98 & 3.66e-03 & 1.00 & 6.32e-05 & 1.93 & 5.51e-03 & 0.99 \\\hline
16384 & 1.83e-03 & 1.01 & 1.13e-05 & 2.01 & 2.76e-03 & 0.99 & 1.83e-03 & 1.00 & 1.60e-05 & 1.98 & 2.76e-03 & 1.00 \\\hline
65536 & 9.16e-04 & 1.00 & 2.82e-06 & 2.00 & 1.38e-03 & 1.00 & 9.15e-04 & 1.00 & 4.01e-06 & 2.00 & 1.38e-03 & 1.00 \\\hline
262144 & 4.58e-04 & 1.00 & 7.05e-07 & 2.00 & 6.90e-04 & 1.00 & 4.58e-04 & 1.00 & 1.00e-06 & 2.00 & 6.90e-04 & 1.00 \\\hline
   \end{tabular}
}
\end{center}
\caption{Comparison between the two FOSLS for the problem described in Section~\ref{sec:ex:fosls}.}
  \label{tab:fosls}
\end{table}

\bibliographystyle{abbrv}
\bibliography{literature}

\end{document}